\newcolumntype{r}{D{.}{.}{-1}}
\newtheorem{theorem}{Theorem}[section]
\theoremstyle{plain}
\newtheorem{conjecture}[theorem]{Conjecture}
\theoremstyle{definition}
\newtheorem{definition}[theorem]{Definition}
\newtheorem{example}[theorem]{Example}
\theoremstyle{plain}
\newtheorem{lemma}[theorem]{Lemma}
\newtheorem{proposition}[theorem]{Proposition}
\theoremstyle{remark}
\newtheorem{remark}[theorem]{Remark}
\numberwithin{equation}{section}
\DeclareMathOperator{\Hom}{Hom}
\DeclareMathOperator{\trop}{trop}
\DeclareMathOperator{\val}{val}
\DeclareMathOperator{\mult}{mult}
\DeclareMathOperator{\Aut}{Aut}
\newcommand {\NN}{{\mathbb N}}
\newcommand{\Q}{\mathbb{Q}}
\begin{document}

\title {Tropical curves and covers and their moduli spaces}
\author{Hannah Markwig}
\address{Hannah Markwig, Eberhard Karls Universit\"at T\"ubingen, Fachbereich Mathematik }
\email{hannah@math.uni-tuebingen.de}

\thanks{\emph{2010 Mathematics Subject Classification:} Primary 14T05, Secondary 14N10.}
\keywords {Tropical geometry, moduli spaces, tropical covers, mirror symmetry, Hurwitz numbers, enumerative geometry}

\begin{abstract}
Tropical geometry can be viewed as an efficient combinatorial tool to study degenerations in algebraic geometry. Abstract tropical curves are essentially metric graphs, and covers of tropical curves maps between metric graphs satisfying certain conditions.
In this short survey, we offer an introduction to the combinatorial theory of abstract tropical curves and covers of curves, and their moduli spaces, and we showcase three results demonstrating how this theory can be applied in algebraic geometry.
\end{abstract}

\maketitle

\section{Introduction}

\label{sec-intro}

\emph{Tropical geometry} can be viewed as an efficient combinatorial tool to study degenerations in algebraic geometry. A degeneration referred to as \emph{tropicalization} associates a combinatorial object to a given algebraic variety which surprisingly captures many important properties of the algebraic variety. For that reason, tropical geometry allows to incorporate methods from discrete mathematics into algebraic geometry. Vice versa, it is also possible to use methods from algebraic geometry in discrete mathematics using tropical geometry, a prime example is the recent proof of Rota's conjecture for characteristic polynomials of matroids by Adiprasito, Huh and Katz \cite{AHK15}. 

For algebraic varieties which come embedded into projective space, tropicalization can be expressed using methods from the theory of Gr\"obner bases and initial ideals. The book \cite{MS15} by Maclagan and Sturmfels offers a broad and friendly introduction for this approach.

But also for abstract varieties, it is possible to study tropicalizations. In particular for the case of curves, abstract tropical geometry has in recent years become an attractive combinatorial theory uniting several important perspectives such as semistable reduction, Berkovich theory resp.\ non-Archimedean analytic geometry, graph theory, topology and representation theory \cite{BN09, BPR11a, ABBR13, Hel17a, Hel17}.

In this article, we offer an introduction to the \emph{combinatorial theory of abstract tropical curves and covers of curves, and their moduli spaces}, and we survey recent results demonstrating how this theory can be applied in algebraic geometry.

An (abstract) \emph{tropical curve} is, roughly, just a metrized graph, possibly with unbounded half-edges called ends. The process of tropicalization that produces a tropical curve from an algebraic curve can be pictured topologically: we think of a smooth algebraic curve as a Riemann surface and let the tubes become so thin that they end up being edges of a graph. For example, an elliptic curve, which is a torus when viewed as a Riemann surface, becomes a circle.

Moduli space of tropical curves are thus essentially spaces parametrizing certain types of graphs. It is clear that such objects show up and are important in other areas of mathematics.

In Section \ref{sec-curves}, we introduce abstract tropical curves and their moduli spaces.

Given a cover of algebraic curves, there is a way to simultaneously degenerate source and target (we can picture this topologically again, where the Riemann surface tubes become really thin) obtaining a map of graphs satisfying certain conditions --- a \emph{tropical cover}.
We discuss tropical covers and their moduli spaces in Section \ref{sec-covers}.

In Section \ref{sec-Hurwitz} and \ref{sec-Hom}, we discuss altogether three applications of the theory of tropical curves and covers and their moduli spaces in algebraic geometry.

Section \ref{sec-Hurwitz} focuses on applications in \emph{enumerative geometry}. In enumerative geometry, we take certain geometric objects (such as rational plane curves of degree $d$) and fix some conditions that they need to satisfy (e.g.\ incidence conditions: we fix points in the plane through which we require our curves to pass), and then we count how many of our geometric objects satisfy our conditions. Some of these questions go back to Ancient Greek mathematicians and are often easy to pose, but hard to answer. 

The use of tropical methods in enumerative geometry was pioneered by Mikhalkin \cite{Mi03} with his so-called \emph{correspondence theorem} proving the equality of important enumerative invariants counting numbers of plane curves satisfying incidence conditions with their tropical counterparts. Here, we discuss \emph{two variants} of so-called \emph{Hurwitz numbers}, which count covers of curves satisfying fixed ramification data (see Subsection \ref{subsec-Hurwitz}). Also for these numbers, analogous correspondence theorems hold, i.e.\  it can be shown that they are equal to their tropical counterparts. Consequently, we can use purely combinatorial methods --- merely counts of graphs satisfying certain conditions --- to determine our Hurwitz numbers, a perspective that has proved to be fruitful to derive new results about counts of covers. The correspondence theorem for our first variant (the so-called \emph{double Hurwitz numbers of the projective line}) appears in Subsection \ref{subsec-double} and for the second variant (the \emph{simple Hurwitz numbers of an elliptic curve}) in Subsection \ref{subsec-ell}.

 In Subsections \ref{subsec-piecewise} and \ref{subsec-mirror}, we showcase how correspondence theorems for Hurwitz numbers and their tropical counterparts can be applied in enumerative geometry: the first result concerns the structure of double Hurwitz numbers of the projective line viewed as a map (Subsection \ref{subsec-piecewise}), and the second concerns covers of an elliptic curve and their connection to mirror symmetry (Subsection \ref{subsec-mirror}).

There are more applications of the theory of tropical covers in enumerative geometry: another prime example is the appearance of tropical covers in the tropical computation of characteristic numbers (or Zeuthen numbers) of the plane \cite{BBM11}. The latter are well-known enumerative invariants which count plane curves satisfying incidence conditions and tangency conditions to given lines. 

Any application of tropical methods in enumerative geometry is closely related to the theory of moduli spaces: a general counting strategy in enumerative geometry is to consider a moduli space parametrizing the geometric objects we study. A condition can then be phrased as a subspace of the moduli space, and the objects satisfying all our conditions correspond to the intersection of the subspaces in question. Connections between moduli spaces in algebraic geometry and in tropical geometry continue to be intensely studied, see e.g.\ \cite{ACP12, Cap13, Ran15, Cap18, Cap18a}. One area that triggered this large attention is enumerative geometry. Also for the two variants of Hurwitz numbers which we discuss in this text, the necessary correspondence theorems --- which are at the heart of the application of tropical methods to enumerative geometry --- can be proved using the theory of moduli spaces and their tropicalizations \cite{CMR14, CMR14b}.

Tropical moduli spaces are useful not only in enumerative geometry however. In Section \ref{sec-Hom}, we show how they appear in a recent striking non-vanishing result for the cohomology of the moduli space of curves of genus $\mathcal{M}_g$ for $g\geq 2$. The cohomology of $\mathcal{M}_g$ has been a subject of intense study for decades (see e.g.\ \cite{Kir02,FPZ00,Tom05}). There have been several conjectures around which predict the vanishing of certain cohomology groups of $\mathcal{M}_g$ (among them a 25-year-old conjecture by Kontsevich) which have now been simultaneously disproved with the use of tropical methods by Chan, Galatius and Payne \cite{CGP18}.

After having introduced tropical curves and covers and their moduli spaces in Sections \ref{sec-curves} and \ref{sec-covers}, we thus offer a glimpse of the interesting applications of these objects in algebraic geometry, both in Section \ref{sec-Hurwitz} dealing with enumerative geometry and in Section \ref{sec-Hom} dealing with the cohomology of the moduli space of curves.

There is a vast literature on tropical curves, tropical covers and their moduli spaces, and this short survey cannot touch upon everything. To get a small insight into the various aspects studied in the context of tropical curves and covers, we give a small selection of recent studies in the area: Tropical curves and their Jacobians appear in \cite{MZ08}. The gonality of tropical curves is the subject of \cite{Cap12}. Tropical covers and their inverse images under tropicalization are the topic of \cite{ABBR15, ABBR13}. Tropical covers with a group action appear in \cite{LUZ19, Son19}. Combinatorial classifications of tropical covers of trees are studied in \cite{DV19}. 

We hope that this short survey serves as an entry point into the subject and guides the readers towards others of the many interesting facets of the theory of tropical curves and covers and their moduli spaces.

\subsection{Acknowledgements} We thank Sam Payne and JuAe Song for helpful comments.

\section{Tropical curves}\label{sec-curves}
An \emph{(abstract) tropical curve} is a connected metric graph $\Gamma$ with unbounded rays called \emph{ends} and a genus function $g:\Gamma\rightarrow \NN$ which is nonzero only at finitely many points. Locally around a point $p$, $\Gamma$ is homeomorphic to a star with $r$ half-edges. 
The number $r$ is called the \emph{valence} of the point $p$ and denoted by $\val(p)$.
We require that there are only finitely many points with $\val(p)\neq 2$. 
A finite set of points containing  (but not necessarily equal to the set of) all points of nonzero genus or valence larger than $2$ may be chosen; its elements  are called  \emph{vertices}.  (If we do not specify differently, we choose as vertices the set of all  points of nonzero genus or valence larger than $2$.)
By abuse of notation, the underlying graph with this vertex set is also denoted by $\Gamma$.
Correspondingly, we can speak about \emph{edges} and \emph{flags} of $\Gamma$. A flag is a tuple $(v,e)$ of a vertex $v$ and an edge $e$ with $v\in \partial e$. It can be thought of as an element in the tangent space of $\Gamma$ at $v$, i.e.\ as a germ of an edge leaving $v$, or as a half-edge (the half of $e$ that is attached to $v$). Edges which are not ends have a finite length and are called \emph{bounded edges}.

A \emph{marked tropical curve} is a tropical curve whose ends are (partially) labeled. An isomorphism of a tropical curve is an isometry respecting the end markings and the genus function. The \emph{genus} of a tropical curve $\Gamma$ is given by
\[
g(\Gamma) = h_1(\Gamma)+\sum_{p\in \Gamma} g(p).
\]
A curve of genus $0$ is called \emph{rational}. A curve satisfying $g(v)=0$ for all $v$ is called \emph{explicit}. The \emph{combinatorial type} is the equivalence class of tropical curves obtained by identifying any two tropical curves which differ only by edge lengths.

\begin{example}[All combinatorial types of rational tropical curves with $4$ marked ends]\label{ex-m041}
Figure \ref{fig-m04} shows all combinatorial types of rational tropical curves with $4$ marked ends which are at least $3$-valent. We do not depict the genus function if the curve is explicit. To pick a tropical curve of one of the three left combinatorial types, we have to fix a length $l$ for the bounded edge connecting the two pairs of ends, e.g. $l=1$.
\begin{figure}
\begin{center}
\input{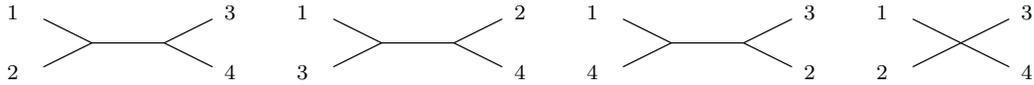}
\end{center}\caption{All at least $3$-valent combinatorial types of rational $4$-marked tropical curves.}\label{fig-m04}
\end{figure}
\end{example}

\begin{example}\label{ex-m12}
Figure \ref{fig-m12} shows all combinatorial types of tropical curves of genus $1$ with $2$ marked ends which are at least $3$-valent. We depict a vertex of genus one as a black dot. We do not label the ends since they are not distinguishable in every type, so there is a unique way to add the numbers $1$  and $2$ to the ends for each picture.
\begin{figure}
\begin{center}
\input{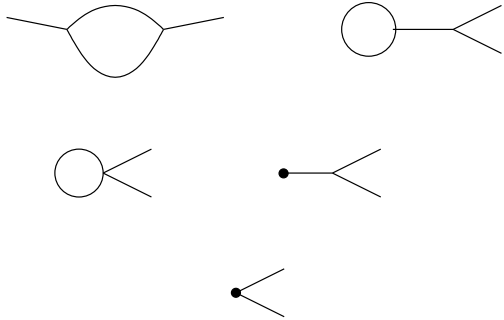}
\end{center}\caption{All combinatorial types of $2$-marked tropical curves of genus $1$.}\label{fig-m12}
\end{figure}
\end{example}

\begin{example}[Two combinatorial types of tropical curves of genus $2$]\label{ex-m2}
Figure \ref{fig-m2} shows two combinatorial types of tropical curves of genus $2$ with no marked ends.
\begin{figure}
\begin{center}
\input{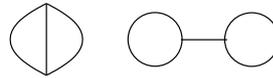}
\end{center}\caption{Two combinatorial types of  tropical curves of genus $2$ with no marked ends.}\label{fig-m2}
\end{figure}
\end{example}

\subsection{Moduli spaces of tropical curves}\label{subsec-moduliofcurves}
The set of all tropical curves of a fixed combinatorial type is a positive orthant, specifying (positive) lengths for each bounded edge. If the underlying graph has automorphisms, then we must identify points of the orthant accordingly.
\begin{example}[(Possibly folded) orthants parametrize tropical curves of a given combinatorial type]\label{ex-folding}
Consider the top left combinatorial type of $2$-marked tropical curve of genus $1$ in Figure \ref{fig-m12}. Denote the lengths of the bounded edges with $a$ and $b$. Then the tuple $(a,b)\in(\mathbb{R}_{>0})^2$ has to be identified with $(b,a)$, because of the automorphism of the underlying graph. The set of all tropical curves of this combinatorial type can thus be viewed as a "folded" positive orthant, see Figure \ref{fig-folded}.
\begin{figure}
\begin{center}
\input{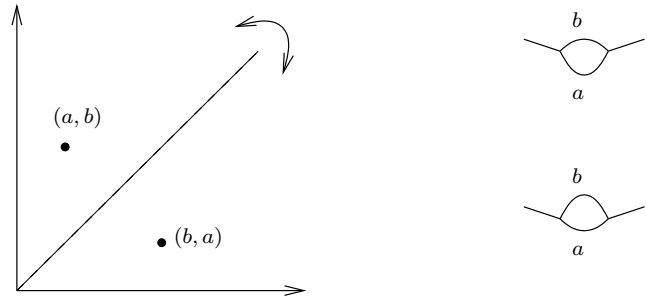}
\end{center}\caption{A folded positive orthant parametrizes all tropical curves of the top left combinatorial type in Figure \ref{fig-m12}. The point $(a,b)$ corresponds to the curve on the upper right, the point $(b,a)$ to the curve on the lower right, which are identified by the automorphism of the underlying graph, switching the two parallel edges.}\label{fig-folded}
\end{figure}
\end{example}
\begin{definition} The \emph{dimension} of a combinatorial type is the number of bounded edges.
\end{definition}
By the above, this equals the dimension of the (possibly folded) positive orthant that parametrizes all curves of the given combinatorial type.

We can see that (possibly folded) positive orthants  are the building blocks that we need if we want to work with a space parametrizing all tropical curves of a given genus and with a fixed number of marked ends. Orthants corresponding to different combinatorial types $\alpha$ and $\beta$ have to be glued if $\alpha$ is obtained from $\beta$ by shrinking edges to length $0$ and identifying their adjacent vertices. More precisely, the orthant corresponding to $\alpha$ can be identified with a coordinate subspace at the boundary of the orthant for $\beta$, where the coordinates which are set zero correspond precisely to the shrinking edges.
An object which is, roughly, glued from cones in such a way (where self-maps among cones as the foldings we discussed in Example \ref{ex-folding} are allowed) is called an \emph{abstract cone complex} \cite{ACP12}.

\begin{definition}[Moduli spaces of tropical curves]
The moduli space $M_{g,n}^{\trop}$ is the abstract cone complex parametrizing all tropical curves of genus $g$ with $n$ marked ends which are at least $3$-valent. If $n=0$, we write $M_{g}^{\trop}$. 
\end{definition} 

\begin{example}[$M_{1,2}^{\trop}$ as abstract cone complex]\label{ex-mgtrop}
Consider the space $M_{1,2}^{\trop}$. The combinatorial types of tropical curves of genus $1$ with two marked ends are depicted in Figure \ref{fig-m12}, where they have been arranged in rows according to their dimension. The two top combinatorial types produce two $2$-dimensional positive orthants, of which one is folded as discussed in Example \ref{ex-folding}. The left type in the second row corresponds to the open ray which is glued between the two $2$-dimensional cones. The right type in the second row is the open ray bounding the other side of the non-folded $2$-dimensional orthant. The combinatorial type in the last row finally gives the vertex. Figure \ref{fig-m12glued} depicts the abstract cone complex $M^{\trop}_{1,2}$. Drawn in red is the simplicial complex (with self-gluing resp.\ folding as induced by the folded orthant) that is given by the subset of all tropical curves whose edge lengths sum up to $1$. 
\begin{figure}
\begin{center}
\input{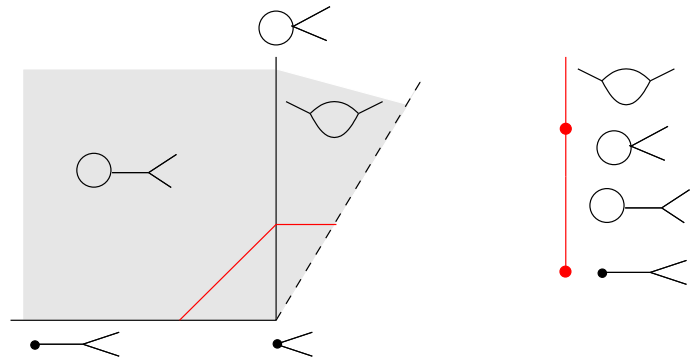}
\end{center}\caption{The cone complex $M_{1,2}^{\trop}$. On the right, the simplicial complex obtained by restricting to curves whose total edge lengths sum up to $1$.}\label{fig-m12glued}
\end{figure}
\end{example}

\begin{example}[$M_2^{\trop}$ and its universal unfolding]
To discuss $M_2^{\trop}$, we restrict to the subset of tropical curves whose edge lengths sum up to $1$, obtaining a simplicial complex as in Example \ref{ex-mgtrop}. Notice that the two combinatorial types of genus $2$ curves depicted in Figure \ref{fig-m2} are the two top-dimensional types. They both yield $3$-dimensional orthants, resp.\ $2$ dimensional simplices. The simplex corresponding to the left picture comes with an action induced by the automorphism group of the underlying graph, which is an $\mathbb{S}_3$-action. It is thus folded along the three middle lines. The combinatorial type on the right yields a simplex which is folded only once. Readers familiar with Outer space \cite{CM86} may notice that the universal unfolding of the explicit part of this subset of $M_2^{\trop}$ equals Outer space in rank $2$ (see Figure 2 in \cite{Vog08}).
\end{example}

\begin{example}[$M_{0,4}^{\trop}$ as cone complex]
The three left combinatorial types in Figure \ref{fig-m04} give three $1$-dimensional positive orthants, i.e.\ open half rays, which are glued at a vertex that corresponds to the right combinatorial type. The space $M_{0,4}^{\trop}$ thus looks like a rational tropical curve with $3$ ends \cite{Mi06}.
\end{example}

\section{Tropical covers}\label{sec-covers}
\begin{definition} [Tropical covers]
\label{def:tropcov}
A \emph{tropical cover} $\pi:\Gamma_1\rightarrow \Gamma_2$ is a surjective map of abstract tropical curves satisfying:
\begin{enumerate} 
\item \emph{Locally integer affine linear.} The map $\pi$ is piecewise integer affine linear, the slope of $\pi$ on a flag or edge $e$ is a positive integer called the \emph{weight} (or expansion factor) $\omega(e)\in \NN_{> 0}$. 
\item \emph{Harmonicity/Balancing.} For a point $v\in \Gamma_1$, the \emph{local degree of $\pi$ at $v$} is defined as follows. Choose a flag $f'$ adjacent to $\pi(v)$, and add the weights of all flags $f$ adjacent to $v$ that map to $f'$:
\begin{equation}
d_v=\sum_{f\mapsto f'} \omega(f).
\end{equation} 
We require that $\pi$ is \emph{harmonic} (resp.\ \emph{requires the balancing condition}) at every vertex, i.e.\ that for each $v\in \Gamma_1$, the local degree at $v$ is well defined (i.e. independent of the choice of $f'$) \cite{BN09}.
\end{enumerate}
\end{definition}
We choose the vertex sets for the source and the target of a tropical cover in the minimal way such that it is a map of graphs, i.e.\ we declare images and preimages of vertices to be vertices.

A tropical cover is called a \emph{tropical Hurwitz cover} if it satisfies the \emph{local Riemann-Hurwitz condition} at each point $v\in \Gamma_1$ \cite{BBM10, Cap12, CMR14} stating that when $v\mapsto v'$ with local degree $d_v$,
\begin{equation}\label{eq-RH}
0\leq d_v(2-2g(v'))-\sum (\omega(e)-1) - (2-2g(v)),
\end{equation}
 where the sum goes over all flags $e$ adjacent to $v$. The number on the right hand side of the inequality can be viewed as a measure for the ramification at $v$, which cannot be less than zero.

The \emph{degree} of a tropical cover is the sum over all local degrees of preimages of a point $a$, $d=\sum_{p\mapsto a} d_p$. By the balancing condition, this definition does not depend on the choice of $a\in \Gamma_2$. For an end $e\in \Gamma_2$ let $\mu_e\vdash d$ be the partition of weights of the ends of $\Gamma_1$ mapping onto $e$. We call $\mu_e$ the \emph{ramification profile} above $e$. 

The \emph{combinatorial type} of a tropical cover is the data obtained when dropping the metric information, i.e.\ it consists of the combinatorial types of the abstract tropical curves together with the weights and the data which edge maps to which.

\begin{example}
Figure \ref{fig-cover} depicts a (combinatorial type of a) tropical cover. The map of abstract tropical curves is harmonic resp.\ balanced, since it has degree $2$ everywhere: above points in the left ends, it has two preimages of weight $1$ each, above points in the bounded edge and in the right ends, it has one preimage of weight $2$. The left vertex satisfies the local Riemann-Hurwitz condition, since $0\leq 2\cdot2 -1-2$. The right vertex does not satisfy it, since $0> 2\cdot2 -1-1-1-2$. 
The ramification profile over the left ends is $(1,1)$, over the right ends it is $(2)$.
\begin{figure}
\begin{center}
\input{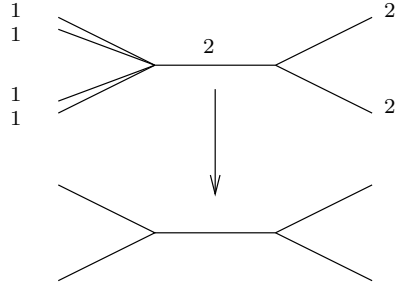}
\end{center}\caption{A tropical rational curve covering another. The numbers at the edges are the weights. We do not specify lengths of edges, the lengths in the source and in the target are related via the weight. (Put differently, we depict only a combinatorial type.) The map is a tropical cover, since it satisfies the balancing condition. It is not a tropical Hurwitz cover, since the local Riemann-Hurwitz condition is violated at the right vertex.}\label{fig-cover}
\end{figure}
\end{example}
Notice that the local Riemann-Hurwitz condition is a \emph{realizability} condition: only tropical covers satisfying it at all points can be degenerations of covers of algebraic curves.

\begin{remark}\label{rem-lengthsrelated}
Given a tropical cover $\pi:\Gamma_1\rightarrow \Gamma_2$, the combinatorial type together with the lengths of $\Gamma_2$ determine the metric of $\Gamma_1$: this is true because for each edge $e_1$ of weight $\omega(e_1)$ and length $l(e_1)$, the length of its image satisfies $l(e_2)=\omega(e_1)\cdot l(e_1)$ by Condition (1) of Definition \ref{def:tropcov}. For that reason, we often do not specify length data in pictures.
\end{remark}

\subsection{Moduli spaces of tropical covers}
It follows from Remark \ref{rem-lengthsrelated} that for a fixed combinatorial type of tropical cover, the orthant parametrizing all possible metrizations for the target curve also parametrizes all tropical covers of this type.
As in Subsection \ref{subsec-moduliofcurves}, we can glue such (possibly folded) orthants to obtain a moduli space of tropical covers. We usually fix the genus, the number of ends of the target curve, the degree, all ramification profiles above ends, and the genus of the source curve.

\begin{example}[A moduli space of tropical covers]\label{ex-tropdHcov}
Let the target curve be of genus $0$ and have two ends. We consider covers with a source curve of genus $1$ and with two ends, of degree $3$. Each end must be covered by ends. We require the ramification profile above each end to be $(3)$. 
Then there is only one top-dimensional combinatorial type of tropical cover of this form, as depicted in Figure \ref{fig-cover2}.
\begin{figure}
\begin{center}
\input{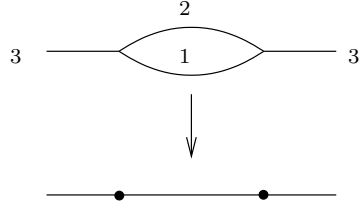}
\end{center}\caption{The top-dimensional combinatorial type of tropical cover for a genus $1$ curve with two ends of ramification profiles $(3)$ covering a genus $0$ curve with $2$ ends with degree $3$.}\label{fig-cover2}
\end{figure}
It is possible to generalize the definition of tropical cover in a way that allows to shrink edges to a point, then also the top right type in Figure \ref{fig-m12} could be the source of a cover.
Without allowing edges of weight $0$, the moduli space of covers of this form is a single ray, corresponding to the combinatorial type depicted in Figure \ref{fig-cover2}. We can apply the forgetful map forgetting the covering map, which takes this ray to a ray of slope $2$ in the right cone of $M^{\trop}_{1,2}$ in Figure \ref{fig-m12glued}.

\end{example}

\section{Counting covers}\label{sec-Hurwitz}

\subsection{Counting covers of Riemann surfaces}\label{subsec-Hurwitz}
A question going back to Hurwitz asks for the number of Riemann surfaces of a fixed genus $g$ covering another Riemann surface of genus $h$ with fixed ramification data. These so-called \emph{Hurwitz numbers} continue to play an important role as a theme connecting various areas of mathematics such as combinatorics, representation theory, geometry and mathematical physics \cite{CM16}. 

A cover of Riemann surfaces is branched over finitely many points. 
Locally around a point $p$ of the source curve, the cover is given by a map of the form $z\mapsto z^r$. The number $r$ is called the ramification index of $p$.
The \emph{ramification profile} of a point in the target is the partition encoding the ramification indices of its preimages. Every point which is not a branch point has ramification profile $(1,\ldots,1)$ . If a point has ramification profile $(2,1,\ldots,1)$, it is called a \emph{simple branch point}.

In this note, we focus on two particular cases.
\begin{enumerate}
\item We consider \emph{double Hurwitz numbers} $H_g(\mu,\nu)$ which count covers of the projective line with ramification profiles $\mu$ (resp.\ $\nu$) over $0$ (resp.\ $\infty$) and only simple branch points else, at fixed points. The number $s$ of simple branch points to be fixed is given by the Riemann-Hurwitz formula
$$s=  -2+2g+\ell(\mu)+\ell(\nu),$$ 
where $\ell(\mu)$ stands for the number of parts of $\mu$.
\item We consider \emph{simple Hurwitz numbers of an elliptic curve} $N_{d,g}$ which count degree $d$, genus $g$  covers of an elliptic curve $E$ with $s$ simple branch points at fixed points, where now $s=2g-2$.
\end{enumerate}
In both counts, each cover is weighted by one over the number of its automorphisms.

In the following, we show how these two versions of Hurwitz numbers can be obtained tropically and why the tropical approach is useful.

\subsection{Tropical double Hurwitz numbers}\label{subsec-double}
In the tropical count corresponding to double Hurwitz numbers, we let the target $\Gamma_2$ be a straight line and require ramification profiles $\mu$ and $\nu$ above the two ends.
We insert $s$ $2$-valent vertices of genus zero at arbitrary but fixed points, and we require these to be images of ramification points, i.e.\ of points for which the local Riemann Hurwitz condition (\ref{eq-RH}) is a strict inequality.

\begin{definition}[Tropical double Hurwitz covers] \label{def-tropdHcov}
Fix integers $s$, $g$ and partitions $\mu$ and $\nu$ of the same number, satisfying $s=  -2+2g+\ell(\mu)+\ell(\nu)$.

 Let $\Gamma_2$ be a straight line with $s$ vertices of genus $0$.
 A \emph{tropical double Hurwitz cover} (for the fixed data) is a tropical Hurwitz cover $\pi:\Gamma_1\rightarrow\Gamma_2$, where
 \begin{enumerate}
 \item $\Gamma_1$ is of genus $g$,
 \item the ramification profile over the left end of $\Gamma_1$ is $\mu$ and over the right end $\nu$,
 \item the preimage of each of the $s$ vertices contains a point for which the local Riemann Hurwitz condition (\ref{eq-RH}) is a strict inequality.
 \end{enumerate}

\end{definition}

\begin{example} The cover depicted in Figure \ref{fig-cover2} is a tropical double Hurwitz cover where $s=2$, $g=1$ and $\mu=\nu=(3)$. 
\end{example}
 
\begin{lemma}\label{lem-tropdoubH} Let $\pi:\Gamma_1\rightarrow \Gamma_2$ be a tropical double Hurwitz cover. Then $\Gamma_1$ is explicit and has $s$ $3$-valent vertices (and only $2$-valent vertices else), one in each preimage of one of the $s$ vertices of $\Gamma_2$.
\end{lemma}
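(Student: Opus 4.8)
The plan is to run one global count of ramification. For each vertex $v$ of $\Gamma_1$ I will introduce the nonnegative integer $\epsilon(v):=\val(v)+2g(v)-2$, then show: (a) the local Riemann--Hurwitz condition (\ref{eq-RH}) forces $\epsilon(v)\ge 0$; (b) condition (3) of Definition~\ref{def-tropdHcov} forces $\sum_{v\in\pi^{-1}(p)}\epsilon(v)\ge 1$ over each of the $s$ vertices $p$ of $\Gamma_2$; and (c) the numerical hypothesis $s=-2+2g+\ell(\mu)+\ell(\nu)$ forces the global equality $\sum_v\epsilon(v)=s$. Together these pin down every $\epsilon(v)$, hence every vertex.

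For (a), I would first note that every vertex of $\Gamma_1$ maps to one of the $s$ genus-$0$, $2$-valent vertices $p_1,\dots,p_s$ of $\Gamma_2$: $\pi$ is a map of graphs, and a point of $\Gamma_1$ of positive genus or valence $\ge 3$ is a vertex, so its image is a vertex of $\Gamma_2$. Fix such a vertex $v$ with $\pi(v)=p_i$. Since $p_i$ has exactly two flags and $d_v\ge 1$, harmonicity forces $v$ to have flags mapping to each of them, so $\val(v)\ge 2$ and $\sum_{f\ni v}\omega(f)=2d_v$, whence $\sum_{f\ni v}(\omega(f)-1)=2d_v-\val(v)$. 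Substituting this together with $g(\pi(v))=0$ into (\ref{eq-RH}) collapses the inequality to $0\le \val(v)+2g(v)-2=\epsilon(v)$, and it is a \emph{strict} inequality exactly when $\epsilon(v)\ge 1$, i.e.\ when $v$ has positive genus or valence $\ge 3$. In particular condition (3) reads: for each $i$, the fibre $V_i:=\pi^{-1}(p_i)$ satisfies $\sum_{v\in V_i}\epsilon(v)\ge 1$.

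For (c), I would count flags. The ends of $\Gamma_1$ map onto the two ends of $\Gamma_2$ with profiles $\mu$ and $\nu$, so $\Gamma_1$ has $\ell(\mu)+\ell(\nu)$ ends; summing valences over all vertices (each bounded edge counted twice, each end once) gives $\sum_v\val(v)=2E+\ell(\mu)+\ell(\nu)$, where $E$ is the number of bounded edges. Connectedness gives $h_1(\Gamma_1)=E-\#\{\text{vertices}\}+1$, and $g=g(\Gamma_1)=h_1(\Gamma_1)+\sum_v g(v)$, so $E-\#\{\text{vertices}\}+\sum_v g(v)=g-1$. Hence
\[
\sum_v\epsilon(v)=\bigl(2E+\ell(\mu)+\ell(\nu)\bigr)+2\sum_v g(v)-2\,\#\{\text{vertices}\}=\ell(\mu)+\ell(\nu)+2(g-1)=s.
\]
Now $\sum_v\epsilon(v)=\sum_{i=1}^{s}\sum_{v\in V_i}\epsilon(v)$ is a sum of $s$ integers each $\ge 1$ by (b), and it equals $s$; so each $V_i$ has $\sum_{v\in V_i}\epsilon(v)=1$, i.e.\ exactly one vertex of $V_i$ has $\epsilon=1$ and all others have $\epsilon=0$. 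Since $\val\ge 2$ at every vertex, $\epsilon(v)=0$ forces $g(v)=0$, $\val(v)=2$, and $\epsilon(v)=1$ forces $g(v)=0$, $\val(v)=3$. This is exactly the claim: $\Gamma_1$ is explicit, and it has precisely $s$ trivalent vertices, one in each $\pi^{-1}(p_i)$, every other vertex being $2$-valent.

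The delicate point will be the global bookkeeping in (c): keeping straight that an end contributes one flag while a bounded edge contributes two, invoking connectedness for $h_1=E-\#\{\text{vertices}\}+1$, and --- a subtlety worth addressing explicitly --- confirming that no vertex of $\Gamma_1$ lies over the interior of an edge of $\Gamma_2$. Such a stray vertex would necessarily also have $\epsilon\ge 1$ (it would be a point of positive genus or valence $\ge 3$, with $\val\ge 2$ there too by harmonicity), so it would add an extra $\ge 1$ to $\sum_v\epsilon(v)$ and contradict $\sum_v\epsilon(v)=s$; hence it cannot occur, and the partition of the vertex set into the fibres $V_1,\dots,V_s$ used above is exhaustive. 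Everything else is the one-line algebraic simplification of (\ref{eq-RH}).
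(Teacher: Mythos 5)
Your proof is correct, and it shares its key first step with the paper's: both reduce the local Riemann--Hurwitz condition (\ref{eq-RH}) over a straight-line target to $0\leq \val(v)+2g(v)-2$, using harmonicity to cancel the $2d_v$. Where you diverge is in the counting. The paper only counts the ``essential'' vertices $V$ (positive genus or valence at least $3$), gets $V\geq s$ from condition (3), and gets $V\leq s$ by combining the Euler characteristic $V-E=1-g'$ with the inductively proved bound $E\leq \ell(\mu)+\ell(\nu)+3g'-3$ (equality iff trivalent); equality of the two bounds then forces explicitness and trivalence. You instead prove the exact global identity $\sum_v\bigl(\val(v)+2g(v)-2\bigr)=\ell(\mu)+\ell(\nu)+2g-2=s$ by counting flags and using $h_1=E-V+1$, and then distribute this total over the $s$ fibres by pigeonhole. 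The two computations are essentially equivalent Euler-characteristic bookkeeping, but your packaging buys a little extra: you avoid the separate ``inductive'' edge-count inequality, you get the statement ``exactly one trivalent vertex in each fibre'' directly rather than as an afterthought of $V=s$, and you explicitly rule out high-valence or positive-genus points over edge interiors (the paper handles this implicitly, since its $V$ counts all such points wherever they map). The paper's version is slightly leaner because the harmless $2$-valent vertices never enter the count; yours is slightly more self-contained and makes the fibre-by-fibre distribution transparent.
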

\begin{proof}

Notice that the local Riemann Hurwitz condition (\ref{eq-RH}) for a cover of a straight line simplifies to $0\leq \val(v)-2+2g(v)$. This is true since the sum over all flags adjacent to $v$ can be arranged into two sums, one for each side of the image vertex to which a flag can map. The sum of the weights of the flags mapping to the left side equals $d_v$, and the same holds for the sum of the weights of the flags mapping to the right. Thus, a contribution of $2d_v$ cancels away in the first two summands of the right hand side of (\ref{eq-RH}) and we are left with the valency of $v$.
We can have a strict inequality only if $\val(v)\geq 3$ or $g(v)\geq 1$. Let the set of vertices of $\Gamma_1$ momentarily be given by all points of genus bigger $0$ or valence bigger $2$, and denote the number of these vertices by $V$. By the above, we can conclude $$V\geq s,$$ since there must be a point with a strict inequality mapping to each of the $s$ vertices of $\Gamma_2$.

By our requirement, the source $\Gamma_1$ has genus $g$ and $\ell(\mu)+\ell(\nu)$ ends. Let $g'$ be the genus of the underlying graph of $\Gamma_1$, i.e.\ $g'=g-\sum_pg(p)$. The number of bounded edges is called $E$. Inductively, we can see that $$E\leq \ell(\mu)+\ell(\nu)+3g'-3,$$ where equality is attained if and only if each vertex is $3$-valent. On the other hand, the Euler characteristic of $\Gamma_1$ is $V-E=1-g'$, so we have \begin{align*}&V-(\ell(\mu)+\ell(\nu)+3g'-3)\leq 1-g' \Rightarrow  \\ & V\leq -2+2g'+\ell(\mu)+\ell(\nu)\leq  -2+2g+\ell(\mu)+\ell(\nu)=s.\end{align*} Combining with the above, we can see $g=g'$, and $E=  \ell(\mu)+\ell(\nu)+3g-3$, i.e.\ each vertex is $3$-valent and of genus zero.
\end{proof}

\begin{remark}\label{rem-interiorpt}
It follows from Lemma \ref{lem-tropdoubH} that for fixed $\Gamma_2$, each tropical double Hurwitz cover corresponds to a unique point in the interior of a top-dimensional cell of the moduli space of tropical genus $g$ covers of  a straight line with $s$ vertices and ramification profiles $\mu$ and $\nu$ over the two ends.
\end{remark}

\begin{remark}[Two views on multiplicities]\label{rem-mult}
In tropical geometry, it is common to count objects with \emph{multiplicity}.
This multiplicity can be thought of in two natural ways:
\begin{itemize}
\item We can view tropical geometry as a degenerate version of algebraic geometry. There are many ways to make the degeneration process precise, all of which are usually referred to as \emph{tropicalization}. For the purpose of this article, we can think of a merely topological degeneration taking a Riemann surface with very thin tubes to a graph.

If we want to use tropical methods for counting objects in algebraic geometry, it is then obvious that we should count with a multiplicity that takes into account how many of our algebraic objects tropicalize to a given tropical one. More concretely, the multiplicity of a tropical double Hurwitz cover  $\pi:\Gamma_1\rightarrow \Gamma_2$  equals the number of covers of Riemann surfaces contributing to $H_g(\mu,\nu)$ which degenerate to $\pi$. 
\item Consider the moduli space of all genus $g$ tropical covers of a straight line with $s$ vertices and ramification profiles $\mu$ and $\nu$ over the two ends. By Remark \ref{rem-interiorpt}, each tropical double Hurwitz cover corresponds to a unique point in the interior of a top-dimensional cell of this moduli space. More precisely, the unique point in a cell is cut out by linear equations fixing the positions of the $s$ vertices. The multiplicity can also be viewed as the (tropical) intersection multiplicity of these linear equations with the cell.
\end{itemize}
It is shown in \cite{CJM10} that both perspectives yield the same multiplicity, and, moreover, that this multiplicity can be given combinatorially in terms of weights of edges as we define now.
\end{remark} 

\begin{definition}[Multiplicity of a tropical double Hurwitz cover] Let $\pi:\Gamma_1\rightarrow \Gamma_2$ be a tropical double Hurwitz cover. We define its multiplicity, $\mult(\pi)$, to be the product of the weights of its bounded edges divided by the number of automorphisms.
\end{definition}
It is shown in \cite{CJM10} that automorphisms of a tropical double Hurwitz cover can only arise because of
\begin{itemize} \item ends of the same weight adjacent to the same vertex ("balanced forks") or
\item cycles formed by two edges of the same weight ("balanced wieners").
\end{itemize}
 Thus
 \begin{equation}
 \mult(\pi)=\left(\prod_e \omega(e) \right)\frac{1}{2^{f+w}},
 \label{eq-mult}
 \end{equation}
 where the product goes over all bounded edges $e$, $f$ denotes the number of balanced forks and $w$ the number of balanced wieners.
 \begin{example}\label{ex-mult}
 The tropical double Hurwitz cover in Figure \ref{fig-cover2} has multiplicity $2\cdot 1=2$.
 \end{example}

 \begin{definition} [Tropical double Hurwitz number]
 Fix positive integers $s$, $g$ and partitions $\mu$ and $\nu$ of the same number, satisfying $s=  -2+2g+\ell(\mu)+\ell(\nu)$.
 Fix a target $\Gamma_2$ as in Definition \ref{def-tropdHcov}.
 The \emph{ tropical double Hurwitz number} $H_g^{\trop}(\mu,\nu)$ is the weighted count of tropical double Hurwitz covers, where each cover is counted with its multiplicity as given in (\ref{eq-mult}).
 \end{definition}

\begin{example}
Let $s=2$, $g=1$ and $\mu=\nu=(3)$. We have seen in Example \ref{ex-tropdHcov} that the cover depicted in Figure \ref{fig-cover2}  is the only tropical double Hurwitz cover to be counted in this situation, and in Example \ref{ex-mult} we have seen that its multiplicity is $2$. Thus $H_1^{\trop}((3),(3))=2$.
     \end{example}

\begin{theorem} [The correspondence theorem for double Hurwitz numbers] \label{thm-corres} Fix an integer $g$ and partitions $\mu$ and $\nu$ of the same number. Then the double Hurwitz number equals its tropical counterpart, i.e.\
$$ H_g(\mu,\nu)=H_g^{\trop}(\mu,\nu).$$
\end{theorem}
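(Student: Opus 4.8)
The plan is to establish a bijection between covers of the projective line contributing to $H_g(\mu,\nu)$ and tropical double Hurwitz covers contributing to $H_g^{\trop}(\mu,\nu)$, matched up so that the multiplicity of each tropical cover records the number (with automorphism weights) of algebraic covers degenerating to it. The natural framework is the theory of moduli spaces and their tropicalizations, as in \cite{CMR14}: one considers the Hurwitz space parametrizing degree-$d$ covers of $\PP^1$ with the prescribed ramification over $0$ and $\infty$ and simple branching elsewhere, equipped with its branch morphism to $M_{0,s+2}$ (recording the positions of the $s$ moving branch points and the two fixed ones), and one tropicalizes this setup. Alternatively, and more combinatorially, one can invoke the description of $H_g(\mu,\nu)$ via monodromy/factorizations in the symmetric group and directly match these with the combinatorial types of tropical covers.

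The steps I would carry out are as follows. First, using Lemma \ref{lem-tropdoubH} and Remark \ref{rem-interiorpt}, observe that fixing the target $\Gamma_2$ (i.e.\ fixing the positions of the $s$ two-valent vertices on the line) pins down each tropical double Hurwitz cover as an interior point of a top-dimensional cell of the moduli space of tropical covers: the source is explicit, trivalent, with exactly $s$ vertices, one over each branch vertex. Second, set up the tropicalization of the Hurwitz space: the branch morphism $\mathrm{br}$ to the moduli space of the target degenerates to the tropical branch morphism, and the fiber of $\mathrm{br}$ over a generic point (the $s$ fixed branch points) tropicalizes to the fiber of the tropical branch morphism over the corresponding generic point of $M_{0,s+2}^{\trop}$. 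Third, count: a generic fiber of $\mathrm{br}$ on the algebraic side consists of finitely many covers, counted with $1/|\Aut|$, and its cardinality is $H_g(\mu,\nu)$; on the tropical side, the fiber over the chosen generic point consists of the tropical double Hurwitz covers, each appearing with the intersection multiplicity of the cutting-out linear conditions with its cell. Fourth, invoke the identification (Remark \ref{rem-mult}, citing \cite{CJM10}) that this tropical intersection multiplicity equals $\left(\prod_e \omega(e)\right)\!/2^{f+w}$, i.e.\ $\mult(\pi)$, and also equals the number of algebraic covers degenerating to $\pi$. Summing over all tropical covers then yields $H_g^{\trop}(\mu,\nu) = H_g(\mu,\nu)$.

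The main obstacle — and the technical heart of the argument — is the compatibility of the algebraic and tropical branch morphisms under tropicalization, i.e.\ showing that the tropicalization of a generic fiber of the algebraic branch map is precisely the corresponding tropical fiber, and that multiplicities are preserved. This requires either a careful analysis of the local structure of the Hurwitz space near its boundary (understanding which stable maps, or admissible covers, sit in each boundary stratum and how the source curve degenerates along the thin-tube picture), or an appeal to a general tropicalization-of-morphisms result for moduli spaces such as the one developed in \cite{CMR14, CMR14b, ACP12}. In particular one must verify that no tropical cover in the generic fiber is "unrealizable" beyond those already excluded by the local Riemann–Hurwitz condition, and that the automorphism-counting on both sides lines up. Everything else — the combinatorics of Lemma \ref{lem-tropdoubH}, the dimension count, the explicit multiplicity formula — is either already in place or routine.
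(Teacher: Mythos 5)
Your proposal takes essentially the same route as the paper: the survey does not prove Theorem \ref{thm-corres} in detail but sketches precisely this strategy --- tropicalize the Hurwitz (admissible covers) moduli space and its branch morphism, match the fiber over fixed branch points with the tropical fiber, and identify the tropical intersection multiplicity with the combinatorial product of edge weights --- deferring the technical work to \cite{CJM10, BBM10, CMR14}. Your outline, including singling out the compatibility of the algebraic and tropical branch morphisms (and of the multiplicities) as the technical heart to be taken from \cite{CMR14, CJM10}, mirrors the paper's own sketch, so there is nothing to flag.
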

If we take the first perspective of Remark \ref{rem-mult}, the equality in the correspondence theorem is obvious. But we have defined the multiplicity combinatorially as a product of edge weights, and so the equality is actually a theorem that has to be proved. There are several ways to prove it: in \cite{CJM10}, we use monodromy representations in the symmetric group, in \cite{BBM10} we use topological degenerations, in \cite{CMR14} we use moduli spaces and intersection theory.
The idea for the latter is to show
\begin{enumerate}
\item that the moduli space of tropical covers is the tropicalization of a suitable corresponding moduli space in algebraic geometry, and
\item that the intersection corresponding to the fixing of branch points in algebraic geometry tropicalizes to the tropical intersection which then yields the multiplicity as mentioned in the second item of Remark \ref{rem-mult}. 
\end{enumerate}
Such a strategy can be used in a much broader context of course and yields correspondence theorems for a range of interesting numbers in enumerative geometry, see e.g.\ \cite{Ran15, ACP12}.

\subsection{Piecewise polynomiality of double Hurwitz numbers}\label{subsec-piecewise}

The main result in the previous section is the correspondence theorem stating the equality $H_g(\mu,\nu)=H_g^{\trop}(\mu,\nu)$. With this equality, we obtain an alternative, purely combinatorial approach to Hurwitz numbers: we can compute double Hurwitz numbers by listing certain graphs and adding up their (combinatorially defined) multiplicities. 
This approach is e.g.\ useful for structural results, as we now describe.

Let $H=\{(\mu,\nu)\in \NN^{\ell(\mu)+\ell(\nu)};\; \sum_i \mu_i=\sum_j \nu_j\}$ be the set of pairs of partitions of an equal number. We can view double Hurwitz numbers as a function $$H_g:H\rightarrow \mathbb{Q}$$ sending a tuple $(\mu,\nu)$ to $H_g(\mu,\nu)$. 
It is interesting to study this function, because it simultaneously describes all double Hurwitz numbers of genus $g$. It turns out that it has intriguing structural results: it is piecewise polynomial in the entries $\mu_i$, $\nu_j$ \cite{gjv:ttgodhn}. We can also describe the leading and the smallest degree of the polynomials and the positions of the walls bounding the loci of polynomiality. Furthermore, we can give wall-crossing formulas, i.e.\ differences among polynomials for neighbouring loci of polynomiality, which can interestingly be given in terms of Hurwitz numbers with smaller input data \cite{ssv:cbodhn}.
Part of this nice structural behaviour of the double Hurwitz number function was known for two decades, but for $g>0$ the positions of the walls and wall-crossing formulas were first discovered with the help of tropical geometry in 2010 \cite{cjm:wcfdhn, Joh10}.

It turns out that all of the structural results appear rather naturally on the tropical side; we would like to demonstrate this here by proving the piecewise polynomiality in genus $0$, i.e.\ of $H_0(\mu,\nu)$.
Furthermore, the methods find applications for other variants of the Hurwitz problem, e.g.\ the piecewise polynomiality of monotone Hurwitz numbers \cite{Hah19, HL19}.

\begin{theorem}[Piecewise polynomiality of double Hurwitz numbers]
The genus $0$ double Hurwitz number function $$H_0:H\rightarrow \mathbb{Q}:(\mu,\nu)\mapsto H_0(\mu,\nu)$$ is piecewise polynomial in the entries $\mu_i,\nu_j$. The walls separating the regions of polynomiality are given by expressions of the form 
$$\sum_{i\in I}\mu_i - \sum_{j\in J}\nu_j=0, $$
where $I$ is a subset of $\{1,\ldots,\ell(\mu)\}$ and $J$ of $\{1,\ldots,\ell(\nu)\}$.
\end{theorem}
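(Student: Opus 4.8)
The plan is to translate the statement to the tropical side via the correspondence theorem (Theorem~\ref{thm-corres}) and then read the piecewise polynomial structure off the explicit combinatorics of tropical double Hurwitz covers furnished by Lemma~\ref{lem-tropdoubH}. Thus it suffices to analyse $H_0^{\trop}(\mu,\nu)$. Throughout we regard the ends of the source lying over the two ends of $\Gamma_2$ as labelled --- the convention under which $H_0$ is genuinely a function of the tuple $(\mu,\nu)$ --- so that in genus $0$ a tropical double Hurwitz cover has no nontrivial automorphisms and contributes the product of the weights of its bounded edges.

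First I would organise the covers by discrete data. Fix $s=\ell(\mu)+\ell(\nu)-2$ and a target line $\Gamma_2$ with marked points $a_1<\dots<a_s$. By Lemma~\ref{lem-tropdoubH}, the source $\Gamma_1$ of any tropical double Hurwitz cover is a tree whose essential vertices are the $s$ trivalent points over $a_1,\dots,a_s$ (all other points being $2$-valent of genus $0$); it carries $\ell(\mu)$ labelled ends over the left end of $\Gamma_2$ and $\ell(\nu)$ over the right end, and has $s-1$ bounded edges in this coarse model. I would then let $\Theta$ run over the \emph{combinatorial types}: a trivalent tree, a labelling of its $\ell(\mu)+\ell(\nu)$ leaves, and an assignment of its internal vertices to $a_1,\dots,a_s$. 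There are finitely many such $\Theta$, independently of the numerical values $\mu_i,\nu_j$.

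Next I would pin down the weights for a fixed $\Theta$. Harmonicity forces each internal vertex to have exactly one flag on one side of its image and two on the other (otherwise harmonicity would be violated, as in the proof of Lemma~\ref{lem-tropdoubH}), so orienting every bounded edge from the smaller to the larger coordinate on $\Gamma_2$ is globally consistent; the balancing condition then says precisely that the weights form a flow on the tree $\Theta$ emanating from the left ends (each a source of strength $\mu_i$) and absorbed by the right ends (each a sink of strength $\nu_j$). Since $\Theta$ is a tree, deleting a bounded edge $e$ disconnects it into two pieces, and if $I$ and $J$ index the left, resp.\ right, ends lying in the component away from which $e$ is oriented, conservation of flow gives
$$\omega_\Theta(e)=\sum_{i\in I}\mu_i-\sum_{j\in J}\nu_j .$$
This takes integer values on $H$ automatically; the type $\Theta$ is realised by a (unique) tropical double Hurwitz cover over $(\mu,\nu)$ exactly when $\omega_\Theta(e)>0$ for every bounded edge $e$, and then its multiplicity is the polynomial $\prod_e\omega_\Theta(e)$ of degree $s-1=\ell(\mu)+\ell(\nu)-3$ in the $\mu_i,\nu_j$. (Subdividing a bounded edge at the remaining preimages of the $a_k$ only raises some of these linear factors to a power, so it does not affect the argument.) Hence, by Theorem~\ref{thm-corres},
$$H_0(\mu,\nu)=H_0^{\trop}(\mu,\nu)=\sum_{\substack{\Theta\ :\ \omega_\Theta(e)>0\\ \text{for all bounded }e}}\ \prod_{\text{bounded }e}\omega_\Theta(e),$$
a sum over the finite set of combinatorial types.

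Finally I would extract piecewise polynomiality. Each linear form $\sum_{i\in I}\mu_i-\sum_{j\in J}\nu_j$ vanishes on a hyperplane of exactly the shape claimed, and only finitely many such hyperplanes arise (one per pair consisting of a type $\Theta$ and a bounded edge). On each open chamber of the complement of this finite hyperplane arrangement inside $H\cap(\RR_{>0})^{\ell(\mu)+\ell(\nu)}$, the sign of every $\omega_\Theta(e)$ is constant, so the index set of the sum above is constant and $H_0$ agrees there with a single polynomial in $\mu_i,\nu_j$; in particular every wall lies in the asserted family. The step that needs the most care --- and the only place the genus-$0$ hypothesis is essential --- is the middle one: a tree admits a \emph{unique} flow with prescribed boundary strengths, which is what makes each edge weight a signed subset sum and hence makes both the polynomial pieces and the walls transparent. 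In positive genus the cycles of the source break uniqueness of the flow, and one must instead sum $\prod_e\omega(e)$ over the lattice points of a polytope depending on $(\mu,\nu)$; this still yields piecewise polynomiality, but with a more intricate wall structure, and is where tropical methods first produced the higher-genus walls and wall-crossing formulas referred to above.
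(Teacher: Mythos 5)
Your proposal is correct and follows essentially the same route as the paper: reduce to $H_0^{\trop}$ via Theorem~\ref{thm-corres}, note that on a rational (tree) source the balancing condition determines each bounded-edge weight uniquely as a signed subset sum $\pm\bigl(\sum_{i\in I}\mu_i-\sum_{j\in J}\nu_j\bigr)$, impose positivity of these weights to cut out the chambers, and sum the products of edge weights over the finitely many admissible combinatorial types. The only difference is bookkeeping: you absorb the assignment of trivalent vertices to the fixed branch points into the combinatorial type, while the paper enumerates oriented trees and then counts the compatible orderings of vertex images separately --- the resulting chamber-wise polynomial sum is the same.
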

\begin{proof}
We build on the correspondence Theorem \ref{thm-corres} stating $H_0(\mu,\nu)=H_0^{\trop}(\mu,\nu)$ and investigate the piecewise polynomiality of $H_0^{\trop}(\mu,\nu)$.

Start by listing all $3$-valent rational graphs with $n=\ell(\mu)+\ell(\nu)$ ends. For each edge, pick an orientation reflecting the way it should be mapped to the target straight line, i.e.\ the source of the edge shall be mapped to the left of the target of the edge. Accordingly, the ends with weights $\mu_i$ are always oriented towards their unique vertex, and the ends with weights $\nu_j$ out of their unique vertex.

These oriented graphs appear as candidates for the source of tropical double Hurwitz covers. Since the graph is rational, the balancing condition uniquely specifies the weight of each edge in terms of the entries $\mu_i$ and $\nu_j$. More precisely, the edge separating the ends with weights $\mu_i$, $i\in I$ and $\nu_j$, $j\in J$ for subsets $I$ and $J$ of $\{1,\ldots,\ell(\mu)\}$ and $\{1,\ldots,\ell(\nu)\}$, respectively, of the ends with weights $\mu_i$, $i\notin I$ and $\nu_j$, $j\notin J$ must have weight $\pm \sum_{i\in I}\mu_i - \sum_{j\in J}\nu_j$ (depending on the chosen orientation). 
A graph qualifies as the source of a tropical double Hurwitz cover if and only if the weights for its edges are positive, i.e.\ if and only if the expressions $\pm (\sum_{i\in I}\mu_i - \sum_{j\in J}\nu_j)>0$ for each edge, splitting the ends in terms of $I$ and $J$ as discussed above. (This implies that for each unoriented graph, one choice of orientation qualifies for each region.)
It follows that the walls are given as above, and that a region of polynomiality is defined by fixing a list of inequalities of the form  $\pm (\sum_{i\in I}\mu_i - \sum_{j\in J}\nu_j)>0$. 

For a fixed region, we then have a finite list of oriented graphs as above which qualify. A given oriented graph may admit more than one tropical double Hurwitz cover, this is the case if there are several ways to map the vertices to their $s$ image points in the straight line $\Gamma_2$ which respects the conditions on the map imposed by the orientation. (For example, if, as in Figure \ref{fig-cycleorder}, there is a cycle formed by four edges of which two connect the vertices $V_1$ and $V_4$ via $V_2$ (positively oriented), and two via $V_3$, then the image of $V_2$ may be smaller or larger than the image of $V_3$.)

For a fixed choice of images of the $s$ vertices, we then obtain a unique tropical double Hurwitz cover, which has to be counted with the product of their edge weights, i.e.\ with a product of expressions of the form $\pm (\sum_{i\in I}\mu_i - \sum_{j\in J}\nu_j)$ which is polynomial in the $\mu_i$ and $\nu_j$. 
It follows that the overall count is polynomial in the $\mu_i$ and $\nu_j$.
\end{proof}

\begin{figure}
\begin{center}
\input{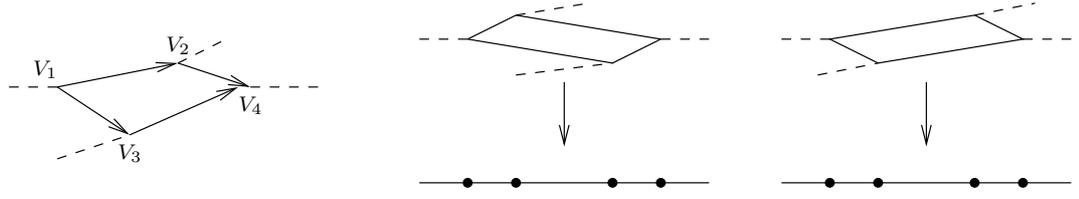}
\end{center}\caption{An oriented source graph may admit several tropical double Hurwitz covers, by picking images for vertices. Notice that the orientation implies that $V_1$ must be mapped before $V_2$ and $V_4$, and $V_1$ before $V_3$ before $V_4$, but the order of the images of $V_2$ and $V_3$ can be chosen.}\label{fig-cycleorder}
\end{figure}

\begin{example}\label{ex-walls}
Fix $\ell(\mu)=\ell(\nu)=2$.
Figure \ref{4ends} shows oriented rational graphs with $2$ in-ends and $2$ out-ends, and with the suitable weight for the bounded edge, satisfying the balancing condition.  
\begin{figure}[tb]
\begin{center}
\input{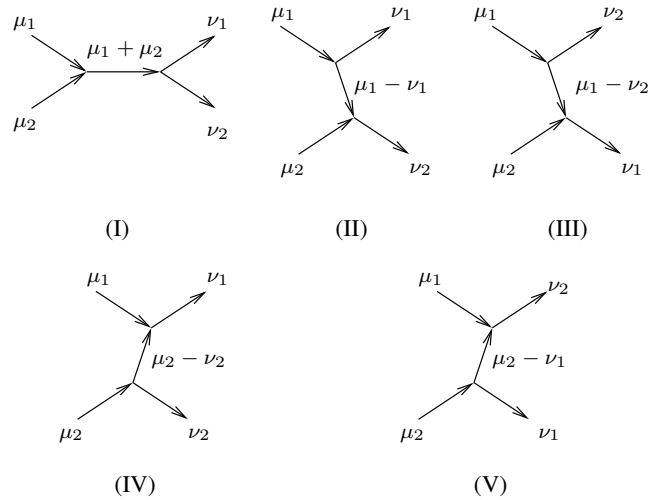}
\end{center}
\caption{Rational graphs with two in-ends and two out-ends, and weights satisfying balancing.}
\label{4ends}
\end{figure}

The regions of polynomiality are defined by the inequalities
$\pm(\mu_1- \nu_1) {>}0$, and $\pm(\mu_1-\nu_2){>} 0$.
Note that two such inequalities, e.g. $\mu_1 > \nu_1$ and $\mu_1 > \nu_2$ imply two other inequalities $\nu_1 >\mu_2$ and $\nu_2 >\mu_2$. This is true since the sum $\mu_1+\mu_2$ equals $\nu_1+\nu_2$.
Figure \ref{walls-ex} shows the four regions and the two walls, and marks which of the above graphs belongs to  each chamber. Also, it shows the polynomial  which equals $H_0$ in each chamber.
\begin{figure}[htb]
\begin{center}
\input{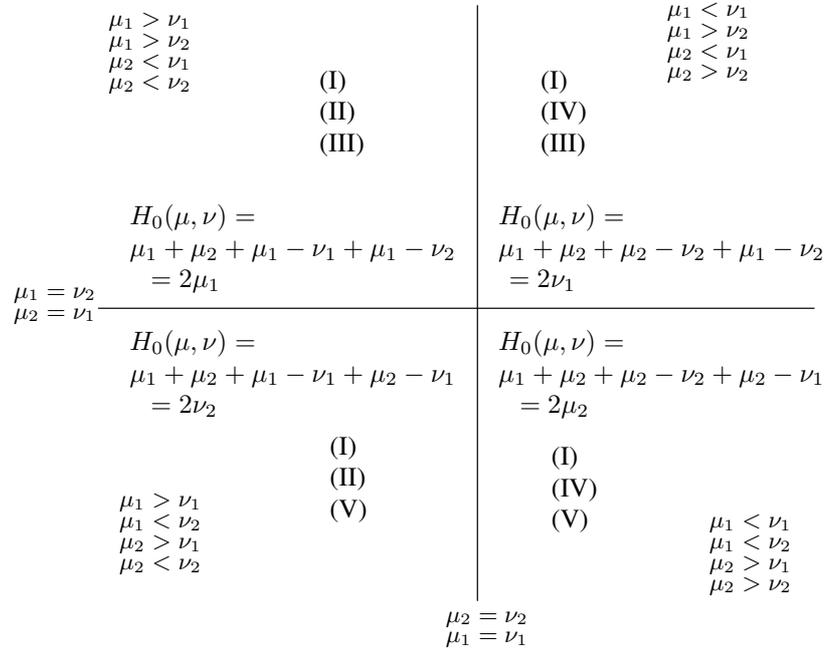}
\end{center}
\caption{The polynomiality regions for $H_0((\mu_1,\mu_2),(\nu_1,\nu_2))$.}
\label{walls-ex}
\end{figure}
\end{example}

\subsection{Tropical simple Hurwitz numbers of an elliptic curve}
\label{subsec-ell}

Also the second counting problem described in Section \ref{subsec-Hurwitz} can be obtained tropically: the count of simply ramified covers of an elliptic curve.

Viewed as Riemann surface, an elliptic curve is a torus. In light of the degeneration process described in Remark \ref{rem-mult} (1), a tropical elliptic curve is thus a circle, equipped with a length. We fix $g\geq 2$ and $2g-2$ points on the circle as vertices, above which we require a preimage for which the local Riemann-Hurwitz condition is a strict inequality. We call this target tropical elliptic curve with $2g-2$ vertices $E$. Analogously to Lemma \ref{lem-tropdoubH}, we can then prove that the source of each tropical Hurwitz cover of $E$ is explicit, and has precisely one $3$-valent vertex above each of the $2g-2$ vertices.

\begin{example}
Figure \ref{fig-tropcover} shows a tropical Hurwitz cover $\pi:\Gamma\rightarrow E$ of degree $4$ with a genus $2$
source curve. The red numbers close to the vertex $P$ are the weights of the corresponding edges, the black numbers denote the lengths. The cover is balanced at
$P$ since there is an edge of weight $3$ leaving in one direction and an
edge of
weight $2$ plus an edge of weight $1$ leaving in the opposite direction.

As discussed in Remark \ref{rem-lengthsrelated}, the length of an edge of $\Gamma$ is determined by its weight and
the length of its image, and we will therefore not specify edge lengths in the next pictures.

\begin{figure}
 \input{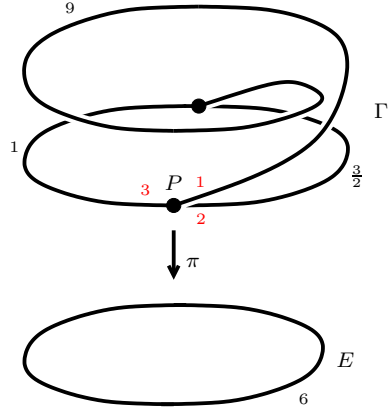}
\caption{A tropical Hurwitz cover of degree $4$ with genus $2$ source
curve.}\label{fig-tropcover}
\end{figure}
\end{example}

The multiplicity of a tropical Hurwitz cover of $E$ is analogous to Equation (\ref{eq-mult}), except balanced forks do not exist, all edges are bounded, and balanced wieners can be ''curled'' around the circle multiple times. 

\begin{definition}[Tropical simple Hurwitz number of $E$]\label{def-tropcovE}
Fix $g\geq 2$. Let $E$ be a tropical elliptic curve, i.e.\ a circle, with $2g-2$ vertices.
 The \emph{ tropical simple Hurwitz number} of $E$ $N_{d,g}^{\trop}$ is the weighted count of tropical Hurwitz covers of $E$ of degree $d$ with a source of genus $g$, where each cover is counted with its multiplicity.
 \end{definition}

\begin{example}
In Figure \ref{fig-ndgex}, we let $g=2$ and $E$ be a circle with two vertices. We depict tropical Hurwitz covers of degree $4$ and genus $2$ of $E$, where each picture has to be counted twice because it can be reflected to give another cover. The multiplicities for the top row are $4\cdot 2\cdot 2\cdot \frac{1}{2}=8$ (where the factor of $\frac{1}{2}$ arises due to the wiener of weight $2$), $4\cdot 3\cdot 1=12$, $3\cdot 2\cdot 1=6$. The multiplicities for the bottom row are $2\cdot 1\cdot 1=2$ and $2\cdot 1\cdot 1\cdot \frac{1}{2}$, where now the factor of $\frac{1}{2}$ arises due to the curled wiener of weight $1$. Altogether, we obtain $N_{d,g}^{\trop}= 2\cdot (8+12+6+2+1)=2\cdot 29=58$.

\begin{figure}
 \input{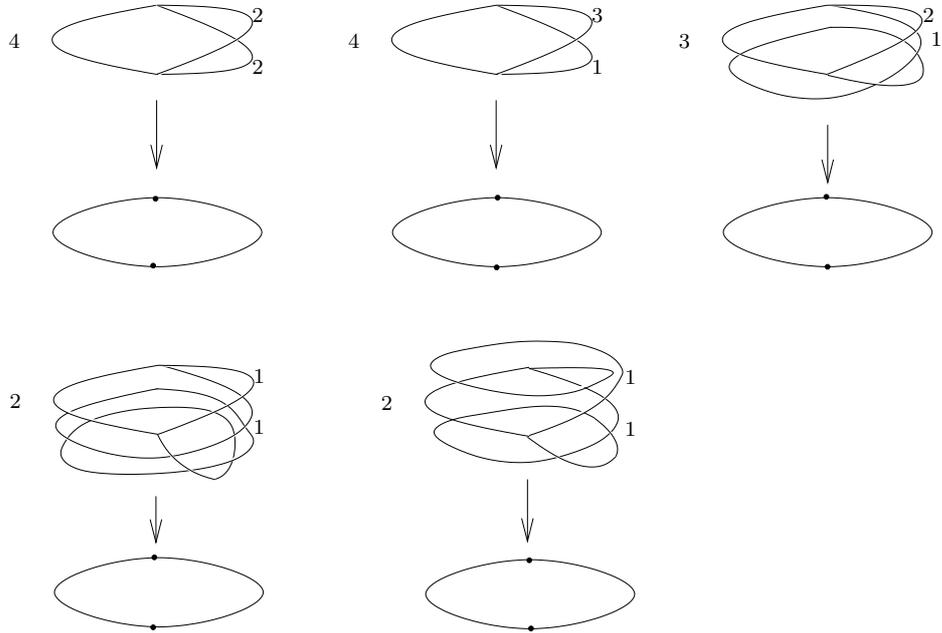}
\caption{The count of genus $2$ degree $4$ tropical Hurwitz covers of $E$.}\label{fig-ndgex}
\end{figure}
\end{example}

\begin{theorem} [The correspondence theorem for simple Hurwitz numbers of an elliptic curve] \label{thm-corresE} Fix an integer $g\geq 2$ and a degree $d$. Then the simple Hurwitz number of an elliptic curve $N_{d,g}$ equals its tropical counterpart, i.e.\
$$ N_{d,g}=N_{d,g}^{\trop}.$$
\end{theorem}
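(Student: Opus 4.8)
The plan is to prove the equality by realizing both sides as a count of combinatorial ``monodromy graphs'' for the problem. On the algebraic side, fixing the $2g-2$ simple branch points on $E$, a connected degree $d$ genus $g$ cover is, up to isomorphism, a tuple $(\sigma,\tau,\rho_1,\dots,\rho_{2g-2})$ in $S_d$ with $\sigma,\tau$ the monodromies along the two loops of $E$, each $\rho_i$ a transposition, $[\sigma,\tau]\,\rho_1\cdots\rho_{2g-2}=1$, the generated subgroup transitive, modulo simultaneous conjugation; weighting each isomorphism class by $1/|\Aut|$ gives $N_{d,g}=\tfrac1{d!}\cdot\#\{\text{such tuples}\}$. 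I would reorganize this count so as to produce exactly $\sum_\pi\mult(\pi)$ over tropical Hurwitz covers $\pi\colon\Gamma\to E$.

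The key geometric move is to \emph{cut} the circle $E$ at a generic point $q$ lying strictly between two marked vertices; topologically this is the vanishing cycle along which an elliptic curve degenerates to the tropical circle. A tropical Hurwitz cover of $E$ then becomes a tropical cover of an interval $\bar E$ carrying the $2g-2$ vertices in their cyclic order, together with a weight-preserving bijection $\phi$ between the edges over the two endpoints $q^\pm$, the latter being the datum that reglues $\Gamma$. By the elliptic analogue of Lemma \ref{lem-tropdoubH} quoted in the text, over $\bar E$ there is exactly one $3$-valent genus $0$ vertex above each $p_i$ and only $2$-valent vertices otherwise, so the shape over $\bar E$ is a length-$(2g-2)$ sequence of cut-or-join moves on partitions of $d$: the edge weights over a generic segment form a partition of $d$, and each vertex either splits one part into two or fuses two parts into one. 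Hence a tropical Hurwitz cover of $E$ is encoded by an initial partition $\mu\vdash d$, a sequence of $2g-2$ such moves returning to a partition of $d$, and a weight-preserving gluing $\phi$ with $\mu$, subject to the regluing being connected of first Betti number $g$. I would prove this is a bijection, exactly as the monodromy-graph description is set up for double Hurwitz numbers.

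For the weighting, I would argue as follows. Run the tropicalization (semistable reduction) on a one-parameter family of algebraic covers degenerating the base to the circle: the source degenerates to a nodal curve $C_0$ whose dual graph is $\Gamma$, with one node for every bounded edge $e$, and near that node the map has the form $z\mapsto z^{\omega(e)}$ on both branches. Smoothing this node compatibly with the chosen smoothing of the target admits precisely $\omega(e)$ choices (the $\omega(e)$-th roots of a ratio of smoothing parameters), so $\prod_e\omega(e)$ algebraic covers degenerate to a fixed tropical $\pi$, and dividing by the automorphisms of $\pi$ — which, since $E$ has no ends, consist only of the order-$2$ symmetries swapping the two equal-weight edges of the $w$ balanced wieners — gives exactly $\mult(\pi)=\prod_e\omega(e)\,/\,2^{w}$. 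Summing over $\pi$ then yields $N_{d,g}=N_{d,g}^{\trop}$. Equivalently, and this is the form in which such correspondences are usually established rigorously, one shows that the moduli space of tropical Hurwitz covers of $E$ is the tropicalization of the moduli space of admissible covers of a genus $1$ target and that the $0$-cycle cut out by fixing the $2g-2$ branch points tropicalizes to the weighted point count $\sum_\pi\mult(\pi)$; this is the elliptic-curve analogue of the strategy indicated for Theorem \ref{thm-corres} via \cite{CMR14}.

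I expect the multiplicity bookkeeping to be the main obstacle, and in particular three points specific to the elliptic target. First, a balanced wiener can \emph{curl} several times around the circle (Figure \ref{fig-ndgex}), and one must verify that the automorphism factor and the roots-of-unity factor still combine to $1/2^{w}$ irrespective of the winding. Second, the clean closed formula for $N_{d,g}$ (a transfer-matrix / character sum, hence a quasimodular generating function) is most naturally a \emph{disconnected} count, so one must either run the whole bijection with connected monodromy graphs throughout --- the route I would take --- or insert a plethystic-logarithm step and check its compatibility with the tropical multiplicities. Third, one must match ``simultaneous conjugation'' on the group side with ``isomorphism of tropical covers'' together with ``relabelling of sheets'' on the graph side, using the choice of minimal vertex set in Definition \ref{def:tropcov} and the $2$-valent vertices carefully so that the $1/|\Aut|$ weights are neither over- nor under-counted.
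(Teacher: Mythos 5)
The paper itself gives no proof of Theorem \ref{thm-corresE}: it defers entirely to \cite{BBBM13} and \cite{BBM10}, and your outline --- cutting the circle at a generic point, encoding covers by monodromy tuples $(\sigma,\tau,\rho_1,\dots,\rho_{2g-2})$ respectively by cut-and-join sequences of partitions of $d$ with a weight-preserving regluing, and matching multiplicities via the $\omega(e)$ local smoothings $z\mapsto z^{\omega(e)}$ at each node together with the wiener automorphisms --- is precisely the degeneration/monodromy-graph route taken in those references. Your plan is correct, and you have rightly flagged the only delicate points (curled wieners, connected versus disconnected counts, and the $1/|\Aut|$ bookkeeping), so there is no substantive objection.
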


For a proof, see \cite{BBBM13} or \cite{BBM10}.

\subsection{Mirror symmetry for elliptic curves}\label{subsec-mirror}
The count of simply ramified degree $d$ genus $g$ covers of an elliptic curve plays a role in mirror symmetry for elliptic curves. Roughly, mirror symmetry is a duality relation among manifolds motivated by string theory. Important invariants of the manifolds get interchanged, notably the Hodge numbers, which can be arranged in a diamond shape, that is then reflected for the mirror manifold. An elliptic curve can be viewed as the smallest interesting case for mirror symmetry. The mirror is itself an elliptic curve, where some structural properties are changed.
Here, we focus on a statement relating the Hurwitz numbers of an elliptic curve with certain Feynman integrals, which we define in \ref{def-int}. These objects only depend on the topology of the elliptic curve, for that reason, we do not have to know more details about mirror elliptic curves.
The statement relating Hurwitz numbers and Feynman integrals (see Theorem \ref{thm-mirror}) was known before tropical covers played a role \cite{Dij95}. However, the tropical approach yields a finer relation which led to interesting new results on quasimodularity \cite{GM16}. Furthermore, tropical mirror symmetry for elliptic curves provides evidence and further strategies for the famous Gross-Siebert program on mirror symmetry which aims at constructing new mirror pairs and providing an algebraic framework for SYZ-mirror symmetry \cite{GS06, GS07, SYZ}. The Gross-Siebert philosophy how tropical geometry can be exploited is illustrated in the following triangle:

\[\scalebox{0.90}{
\begin{tikzpicture}[<->,>=stealth',shorten >=1pt,auto]
\coordinate (a) at (0,0);
\coordinate (b) at (-3.4,3.5);
\coordinate (c) at (3.4,3.5);
\node(1) at (a)  {\begin{tabular}{c}tropical\\ Hurwitz numbers\end{tabular}};
\node(2) at (b) {\begin{tabular}{c}Hurwitz\\ numbers\end{tabular}};
\node(3) at (c)  {\begin{tabular}{c}Feynman\\ integrals\end{tabular}};
\path[every node/.style={}]
(1) edge node[left,sloped, anchor=center] {\begin{tabular}{c}correspondence\\ theorem\end{tabular}} (2)
(2) edge [dashed] node {Mirror symmetry} (3)
(3) edge node[right,sloped, anchor=center] {} (1);
\end{tikzpicture}
}
\]

For manifolds other than an elliptic curve, the invariants have to be replaced by suitable generalizations. Naturally, we expect correspondence theorems to hold for the generalizations of Hurwitz numbers. The right arrow of the triangle is more mysterious, but the hope is that there is an equally natural connection relating tropical (generalized) Hurwitz numbers to (generalized) Feynman integrals. Then, the detour via tropical geometry can be used to prove relations like Theorem \ref{thm-mirror} among Hurwitz numbers and Feynman integrals for new pairs of mirror manifolds.
The fact that for elliptic curves, the right arrow that we call the tropical mirror symmetry relation (see Theorem \ref{thm-tropmirror}) naturally holds on a fine level supports this general strategy, or more generally, the idea to use tropical geometry as a tool in mirror symmetry.

We now define Feynman integrals, state the mirror symmetry theorems and give the idea how to use tropical geometry for the proof.

\begin{definition}[The propagator]\label{def-prop}
 We define the \emph{propagator} $$P(z,q):=\frac{1}{4\pi^2}\wp(z,q)+\frac{1}{12}E_2(q^2)$$ in terms of the Weierstra\ss{}-P-function $\wp$ and the Eisenstein series $$E_2(q):=1-24\sum_{d=1}^\infty \sigma(d)q^d.$$ Here, $\sigma=\sigma_1$ denotes the sum-of-divisors function $\sigma(d)=\sigma_1(d)=\sum_{m|d}m$.
\end{definition}
The variable $q$ above should be considered as a coordinate of the moduli space of elliptic curves, the variable $z$ is the complex coordinate of a fixed elliptic curve. (More precisely, $q=e^{i\pi \tau}$, where $\tau \in \mathbb{C}$ is the parameter in the upper half plane in the well-known definition of the Weierstra\ss{}-P-function.)

\begin{definition}[Feynman graphs and integrals]\label{def-int}
 A \emph{Feynman graph} $\Gamma$ of genus $g$ is a $3$-valent connected graph of genus $g$. For a Feynman graph, we throughout fix a reference labeling $x_1,\ldots,x_{2g-2}$ of the $2g-2$ vertices and a reference labeling $q_1,\ldots,q_{3g-3}$ of the edges of $\Gamma$.

For an edge $q_k$ of $\Gamma$ we call its neighbouring vertices $x_{k_1}$ and $x_{k_2}$ (the order plays no role).
Pick a total ordering $\Omega$ of the vertices and starting points of the form $iy_1,\ldots, iy_{2g-2}$ in the complex plane, where the $y_j$ are pairwise different small real numbers.
We define integration paths $\gamma_1,\ldots,\gamma_{2g-2}$ by $$\gamma_j:[0,1]\rightarrow \mathbb{C}:t\mapsto iy_j+t,$$ such that the order of the real coordinates $y_j$ of the starting points of the paths equals $\Omega$. 
We then define the integral \begin{equation}I_{\Gamma,\Omega}(q):= \int_{z_j\in \gamma_j} \prod_{k=1}^{3g-3} P(z_{k_1}-z_{k_2},q)dz_1\ldots dz_{2g-2}.\label{eq-Igamma}\end{equation}
\end{definition}

The integral thus depends on the combinatorics of the graph; the product in the integral can be viewed as a product over the edges. Since we integrate the variables $z_i$ away, we obtain a series in $q$.

\begin{example}
Figure \ref{fig-raupe} depicts a Feynman graph for $g=3$. For integration paths $\gamma_i$ chosen according to an order $\Omega$, the Feynman integral $I_{\Gamma,\Omega}(q)$ is 
$$ \int_{z_4\in \gamma_4} \cdots \int_{z_1\in \gamma_1}P(z_1-z_3,q)\cdot P(z_1-z_2,q)^2\cdot  P(z_2-z_4,q)\cdot P(z_3-z_4,q)^2 dz_1 dz_2 dz_3 dz_4.$$

\begin{figure}
\begin{center}
 \input{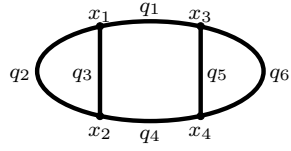}
\end{center}
\caption{A Feynman graph.}\label{fig-raupe}
\end{figure}
\end{example}

The following is the precise statement of the top arrow in the triangle above:
\begin{theorem}[Mirror Symmetry for elliptic curves]\label{thm-mirror}
Let $g>1$. For the definition of the invariants, see Section \ref{subsec-Hurwitz} and Definition \ref{def-int}. We have  $$ \sum_{d=1}^\infty N_{d,g} q^{2d}= \sum_{\Gamma} \frac{1}{|\Aut(\Gamma)|}\sum_\Omega I_{\Gamma,\Omega}(q),$$ where $\Aut(\Gamma)$ denotes the automorphism group of $\Gamma$, the first sum goes over all $3$-valent graphs $\Gamma$ of genus $g$ and the second over all orders $\Omega$ of the vertices of $\Gamma$.
\end{theorem}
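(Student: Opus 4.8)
The plan is to prove the theorem by combining the tropical correspondence Theorem~\ref{thm-corresE} with a direct analysis of the Feynman integrals $I_{\Gamma,\Omega}(q)$. Since Theorem~\ref{thm-corresE} gives $N_{d,g}=N_{d,g}^{\trop}$, it suffices to show
\[
\sum_{d=1}^\infty N_{d,g}^{\trop}\, q^{2d} \;=\; \sum_{\Gamma}\frac{1}{|\Aut(\Gamma)|}\sum_\Omega I_{\Gamma,\Omega}(q),
\]
and the idea is to expand the right-hand side so that each term in its $q$-expansion is visibly the contribution of a single tropical Hurwitz cover of $E$. First I would recall that, by the analogue of Lemma~\ref{lem-tropdoubH} for $E$ mentioned in Section~\ref{subsec-ell}, every tropical Hurwitz cover of $E$ of genus $g$ has an explicit $3$-valent source $\Gamma$ with exactly one $3$-valent vertex above each of the $2g-2$ vertices of $E$; so fixing such a $\Gamma$ is exactly fixing a Feynman graph, and specifying the cover amounts to specifying the cyclic positions of the images of the vertices (an ordering datum, matching $\Omega$) together with the weights $\omega(q_k)$ of the edges and how many times each ``balanced wiener'' winds around the circle.

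Next I would compute the propagator's Fourier expansion: using the classical $q$-expansion of $\wp$ and $E_2$, one has an expansion of the form
\[
P(z,q)=\sum_{a\in\ZZ}\Bigl(\sum_{\substack{m\mid a,\ m>0}} m\, q^{?}\Bigr)e^{2\pi i a z}+(\text{constant term adjustments}),
\]
more precisely $P(z,q)$ is, up to the constant, $\sum_{a\neq 0}\frac{a}{1-q^{?a}}$-type data; the key structural point is that the Fourier coefficient of $e^{2\pi i a z}$ in $P$ is a sum over positive divisors, i.e.\ over ways of writing $|a|=\omega\cdot\ell$ with $\omega$ the ``true'' slope and $\ell$ the winding number, each contributing $\omega\, q^{2|a|}$ (the factor $\omega$ being exactly the edge weight appearing in $\mult(\pi)$). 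Plugging this Fourier expansion of each factor $P(z_{k_1}-z_{k_2},q)$ into \eqref{eq-Igamma} and integrating each $z_j$ over the path $\gamma_j$ kills every term except those where the total exponent of $z_j$ vanishes for each $j$; this is precisely a \emph{balancing condition} at each vertex $x_j$, so the surviving terms are labeled by integer flows on the edges of $\Gamma$ that balance at every vertex --- equivalently, by tropical covers of $E$ with that source graph. The order $\Omega$ of the vertices and the signs of the integration then select exactly one cyclic configuration of the $2g-2$ branch points, so summing over $\Omega$ reproduces the sum over all tropical covers with source $\Gamma$, while the $1/|\Aut(\Gamma)|$ prefactor, combined with the residual symmetry factors from balanced wieners, assembles the full tropical multiplicity $\mult(\pi)$ from \eqref{eq-mult} (adapted to $E$ as in Section~\ref{subsec-ell}).

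The main obstacle I anticipate is the careful bookkeeping in the last step: matching the automorphism and symmetry factors so that $\sum_\Gamma \frac{1}{|\Aut(\Gamma)|}\sum_\Omega(\cdots)$ exactly equals $\sum_\pi \mult(\pi)q^{2d}$ with no overcounting or undercounting. In particular one must check that the orbit-counting for automorphisms of the abstract graph $\Gamma$ versus automorphisms of the cover $\pi$ (the ``curled balanced wieners'' contributing factors of $\tfrac12$, and the winding numbers) is consistent, and that the constant term $\tfrac{1}{12}E_2(q^2)$ in the propagator --- which encodes the $a=0$ Fourier mode --- contributes correctly to edges of weight data including the degenerate pieces. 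A secondary technical point is justifying the term-by-term integration (convergence of the $q$-series and the interchange of sum and integral along $\gamma_j$), which is routine for formal power series in $q$ once one works in $\mathbb{Q}[[q]]$. Modulo this combinatorial reconciliation, the theorem follows; for the full details one may follow the argument of~\cite{BBBM13} (see also~\cite{GM16}).
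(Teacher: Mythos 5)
Your proposal follows essentially the same route as the paper: reduce to the tropical statement via Theorem \ref{thm-corresE}, expand the propagator into divisor sums, and note that the integration (residue/constant-term extraction after the coordinate change) enforces the balancing condition at each vertex, so that the surviving monomials biject with tropical Hurwitz covers of $E$, with edge weight times winding number producing the $q$-exponents and the order-dependent constant term handling edges not passing over the base point. The bookkeeping you defer at the end (automorphisms, curled wieners, matching $\mult(\pi)$) is precisely what the paper's version handles by passing to labeled covers and the refined multivariate integrals $I_{\Gamma,\Omega}(q_1,\ldots,q_{3g-3})$ before setting $q_k=q$, following \cite{BBBM13}.
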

The statement thus declares the equality of the generating series of Hurwitz numbers (by definition a series in $q$) with the sum of Feynman integrals, which is also a series in $q$ as we saw above. Despite the fact that both expressions are series in $q$, they have essentially nothing in common and the statement seems like magic. It is nice to see how tropical geometry can explain this magic by boiling everything down to a combinatorial hunt of monomials in generating functions which is bijectively related to tropical covers.

Using the correspondence Theorem \ref{thm-corresE}, it is obviously sufficient to prove the following theorem in order to obtain a tropical proof of Theorem \ref{thm-mirror}. This theorem can be viewed as the more mysterious right arrow in the triangle above.

\begin{theorem}[Tropical curves and integrals]\label{thm-tropmirror}
 Let $g>1$. For the definition of the invariants, see Definitions \ref{def-tropcovE} and \ref{def-int}. We have  $$ \sum_{d=1}^\infty N_{d,g}^{\trop} q^{2d}= \sum_{\Gamma}\frac{1}{|\Aut(\Gamma)|}\sum_{\Omega} I_{\Gamma,\Omega}(q),$$ where the first sum goes over all $3$-valent graphs $\Gamma$ of genus $g$ and the second over all orders $\Omega$ of the vertices of $\Gamma$.
\end{theorem}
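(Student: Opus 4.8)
The plan is to convert this analytic identity into a purely combinatorial one: expand the propagator into a Fourier--Lambert series, substitute it into $I_{\Gamma,\Omega}(q)$, carry out the (elementary) path integrals, and recognise each surviving term as a tropical Hurwitz cover of $E$; then summing over the Feynman graphs $\Gamma$ and the orders $\Omega$ and dividing by $|\Aut(\Gamma)|$ must reproduce $\sum_{d\ge 1}N_{d,g}^{\trop}q^{2d}$. Note that, in contrast to deducing Theorem~\ref{thm-mirror} from Theorem~\ref{thm-tropmirror}, this step does not invoke the correspondence theorem: it is a direct identity of formal $q$-series (cf.\ \cite{BBBM13}).

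\emph{Step 1: the propagator expansion.} Set $w=e^{2\pi i z}$. The term $\tfrac1{12}E_2(q^2)$ in Definition~\ref{def-prop} is present precisely to cancel the constant Fourier coefficient of $\wp$, so that (up to an overall sign depending on conventions, which is immaterial for the structure of the argument) $P$ has the classical expansion $\tfrac{w}{(1-w)^2}+\sum_{n\ge 1}q^{2n}\sum_{d\mid n}d\,(w^{d}+w^{-d})$, which regrouped by powers of $w$ reads
\[
P(z,q)=\sum_{m>0}\frac{m}{1-q^{2m}}\,w^{m}\ +\ \sum_{m<0}\frac{|m|\,q^{2|m|}}{1-q^{2|m|}}\,w^{m}
\]
in the annulus $|q|^{2}<|w|<1$, together with the $w\mapsto w^{-1}$--reflected series in $1<|w|<|q|^{-2}$ (using that $\tfrac{w}{(1-w)^2}$ is invariant under $w\mapsto w^{-1}$). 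The expansions $\tfrac{m}{1-q^{2m}}=m\sum_{j\ge 0}q^{2mj}$ and $\tfrac{|m|q^{2|m|}}{1-q^{2|m|}}=|m|\sum_{j\ge 1}q^{2|m|j}$ are to be read as ``an edge of weight $\omega=|m|$, oriented according to the sign of $m$, winding $j$ times around $E$'', with $j$ allowed to vanish on the ``pole'' side of the expansion and forced to be $\ge 1$ on the other side.

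\emph{Step 2: substituting and integrating.} Fix a Feynman graph $\Gamma$ and an order $\Omega$ of its $2g-2$ vertices. The order $\Omega$ fixes the signs of the imaginary parts $y_{k_1}-y_{k_2}$ of $z_{k_1}-z_{k_2}$ along the integration paths $\gamma_j$ of Definition~\ref{def-int}, hence which of the two expansions of Step~1 applies to the factor attached to the edge $q_k$; since $|w_{k_1}/w_{k_2}|=e^{-2\pi(y_{k_1}-y_{k_2})}$ is \emph{constant} along the $\gamma_j$, the relevant series converge uniformly and interchanging sum and integral is legitimate (it is in any case fine at the level of formal $q$-series). Expanding $\prod_{k}P(z_{k_1}-z_{k_2},q)$ edge by edge writes $I_{\Gamma,\Omega}(q)$ as a sum over decorations of $\Gamma$ by a weight function $\omega$, an orientation and a winding function $j$ (compatible with $\Omega$ as in Step~1); such a term carries the coefficient $\prod_{e}\omega(e)$, a power $q^{2d}$ whose exponent is read off from the windings, and a monomial $\prod_{j}w_j^{c_j}$, where $c_j$ is the oriented sum of the weights of the edges incident to $x_j$. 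Since
\[
\int_{\gamma_j}w_j^{\,c}\,dz_j=\int_0^1 e^{2\pi i c(iy_j+t)}\,dt=\begin{cases}1,& c=0,\\ 0,& c\in\ZZ\setminus\{0\},\end{cases}
\]
the iterated integral kills every term except those with $c_j=0$ for all $j$ --- i.e.\ exactly the harmonicity/balancing Condition~(2) of Definition~\ref{def:tropcov} at every vertex. Hence $I_{\Gamma,\Omega}(q)=\sum\bigl(\prod_{e}\omega(e)\bigr)q^{2d}$, summed over all balanced $\Omega$--compatible decorations of $\Gamma$.

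\emph{Step 3: identification with tropical covers, and the main obstacle.} A balanced $\Omega$--compatible decoration of $\Gamma$ is exactly the data of a tropical Hurwitz cover $\pi\colon\Gamma\to E$ onto the circle with $2g-2$ marked points placed in the cyclic order induced by $\Omega$: the weights are the expansion factors, balancing is built in, the local Riemann--Hurwitz inequality is automatic (and strict) at a $3$--valent genus-$0$ vertex, and the orientation together with the winding numbers prescribes, via Remark~\ref{rem-lengthsrelated}, the image path of each edge and hence the metric of $\Gamma$; conversely, by the argument of Lemma~\ref{lem-tropdoubH} adapted to $E$, the source of any such cover is already a $3$--valent genus-$g$ graph, so nothing is lost. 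The factor $\prod_{e}\omega(e)$ is precisely the product-of-edge-weights part of $\mult(\pi)$ from Definition~\ref{def-tropcovE}. What remains --- and this I expect to be the real work --- is the automorphism and over-counting bookkeeping: the same cover $\pi$ (up to isomorphism) is produced by several pairs $(\Omega,\text{decoration})$, and one must check that
\[
\sum_{\Gamma}\frac{1}{|\Aut(\Gamma)|}\sum_{\Omega}\ \bigl(\text{coefficient of }q^{2d}\text{ in }I_{\Gamma,\Omega}(q)\bigr)\ =\ \sum_{\pi}\mult(\pi)\ =\ \sum_{\pi}\frac{\prod_e\omega(e)}{|\Aut(\pi)|},
\]
the right-hand sum running over degree-$d$ genus-$g$ tropical Hurwitz covers of $E$; in particular the factors $\tfrac1{2^{w}}$ attached to curled balanced wieners in $\mult(\pi)$ must be recovered from the over-counting produced by pairs of parallel edges of equal weight. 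This orbit count --- comparing the actions of $\Aut(\Gamma)$ and of the $(2g-2)!$ orders $\Omega$ on the set of decorations, with point stabilisers governed by $\Aut(\pi)$ --- is the crux; everything else is the routine unravelling carried out above (see \cite{BBBM13, BBM10}).
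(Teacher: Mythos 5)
Your proposal is correct and follows essentially the same route as the paper (which in turn follows \cite{BBBM13}): expand the propagator as a Fourier--Lambert series, let the integration along the paths kill every monomial that is not balanced at each vertex, and read off the surviving terms as tropical Hurwitz covers of $E$ counted with $\prod_e\omega(e)$. The only organizational difference is that the paper deals with the over-counting you flag in Step 3 \emph{before} expanding: it passes to labeled tropical covers and to refined Feynman integrals with one variable $q_k$ per edge (and a base point $p_0$ recording a multidegree), so that the bijection between monomials and covers is exact cover-by-cover, and the comparison with the sum over orders $\Omega$, the division by $|\Aut(\Gamma)|$, and the $1/2^{w}$ factors from parallel edges resp.\ curled wieners is absorbed into the single reduction step ``set $q_k=q$ and take graph automorphisms into account'', which the paper, like you, defers to \cite{BBBM13}. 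So the orbit count you identify as the crux is not carried out in the survey either; your argument covers the same ground at a comparable level of detail, and in fact treats the balancing/constant-term step in general where the paper argues through a worked example.
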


To prove this theorem, it turns out that it is more natural to use finer structures keeping the labeling of the Feynman graph present.

For the tropical covers this means that we introduce \emph{labeled tropical covers} for which we require that the source is a metrization of a Feynman graph, with its labels. For an example, see Figure \ref{fig-labelledCover}. The edges
labeled $q_2,q_3$ and $q_6$ are supposed to have weight $1$, the edges $q_1$
and $q_4$ weight $2$ and $q_5$ weight $3$. The underlying Feynman graph is the one of Figure \ref{fig-raupe}.

What we gain is the possibility to define an \emph{order for a labeled tropical Hurwitz cover of $E$}: we fix a point $p_0$ in $E$ and we denote the vertices of $E$ starting at $p_0$ clockwise by $p_1,\ldots,p_{2g-2}$. We then define the order $\Omega$ for the cover by the order of the vertices $x_i$ which are mapped to $p_1,\ldots,p_{2g-2}$. For the example in Figure \ref{fig-labelledCover}, the order is $x_1<x_3<x_4<x_2$.

Furthermore, we can define a \emph{multidegree} for a labeled tropical Hurwitz cover of $E$ as the vector $\underline{a}=(a_1,\ldots a_{3g-3})$ that measures the degree in each edge: $$a_i= \#(\pi^{-1}(p_0)\cap q_i)\cdot w_i,$$
where $w_i$ denotes the weight of the edge $q_i$. For the example in Figure \ref{fig-labelledCover}, the multidegree is 
 $\underline{a}=(0,1,1,0,0,2)$. Notice that $q_6$, although it has weight $1$, contributes $2$ to the multidegree, because it is curled such that it passes over $p_0$ twice.

 \begin{definition}[Counts of labeled tropical covers] For a fixed Feynman graph $\Gamma$, a fixed multidegree $\underline{a}$ and an order $\Omega$, we let $N_{\underline{a},\Omega}^\Gamma$ be the number of labeled tropical Hurwitz covers of $E$ whose source equals $\Gamma$ and with the multidegree $\underline{a}$ and the order $\Omega$ (counted with multiplicity).
 \end{definition}
 
\begin{figure}
 \begin{center}
  \input{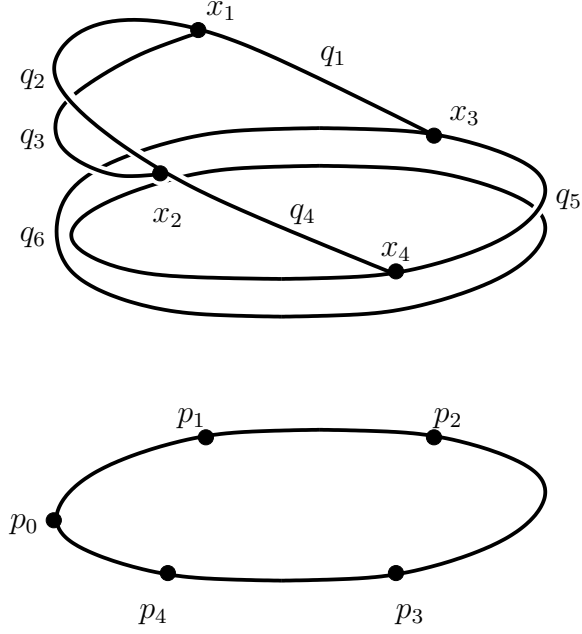}
 \end{center}
\caption{A labeled tropical cover of $E$.}\label{fig-labelledCover}
\end{figure}

For the Feynman integrals, we do not insert $q$ as second variable in the propagators but the edge variable $q_k$. For the graph in Figure \ref{fig-raupe}, we thus obtain 
\begin{align*} \int_{z_4\in \gamma_4} \cdots \int_{z_1\in \gamma_1}&P(z_1-z_3,q_1)\cdot P(z_1-z_2,q_2)\cdot P(z_1-z_2,q_3)\cdot \\&  P(z_2-z_4,q_4)\cdot P(z_3-z_4,q_5)\cdot P(z_3-z_4,q_6) dz_1 dz_2 dz_3 dz_4.\end{align*}

A \emph{refined Feynman integral} like this is a series in the $3g-3$ variables $q_1,\ldots,q_{3g-3}$. 
The tropical mirror symmetry theorem \ref{thm-tropmirror} follows from the following statement (after setting back $q_k=q$ and taking graph automorphisms into account \cite{BBBM13}):

\begin{theorem}[Labeled tropical covers and refined Feynman integrals]
For a fixed Feynman graph $\Gamma$, multidegree $\underline{a}$ and order $\Omega$ the number of labeled tropical Hurwitz covers of $E$, $N_{\underline{a},\Omega}^\Gamma$, equals the $q_1^{a_1}\cdot \ldots \cdot q_{3g-3}^{a_{3g-3}}$-coefficient of the series $I_{\Gamma,\Omega}(q_1,\ldots,q_{3g-3})$.
\end{theorem}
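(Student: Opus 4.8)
The plan is to expand each propagator occurring in the refined integral $I_{\Gamma,\Omega}(q_1,\dots,q_{3g-3})$ into a Laurent series in the exponential coordinates $\zeta_j=e^{2\pi i z_j}$, integrate term by term along the horizontal contours $\gamma_j$, and then match the resulting combinatorial sum with the weighted count $N_{\underline a,\Omega}^{\Gamma}$ by an explicit, weight‑preserving bijection. \emph{Step 1 (the propagator expansion).} The analytic input is the Fourier--$q$-expansion of $P$: on the locus where $\operatorname{Im}(z)$ has a fixed sign, $P(z,q)$ is a Laurent series in $\zeta=e^{2\pi i z}$ whose coefficient of $q^{a}$, for $a\geq1$, is the Laurent polynomial $\sum_{w\mid a}w(\zeta^{w}+\zeta^{-w})$, and whose coefficient of $q^{0}$ is $\sum_{w\geq1}w\,\zeta^{\,sw}$ with $s=\operatorname{sign}(\operatorname{Im}z)$. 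This is a classical identity for $\wp$ and $E_2$ which I would quote from \cite{Dij95,BBBM13}; the point of the normalization by $\tfrac1{12}E_2(q^2)$ is exactly that it cancels the constant and the divisor‑sum terms, so no $\zeta^{0}$-term survives. The one fact that matters downstream is: for every positive integer $w$ and every $a\geq0$, the coefficient of $\zeta^{w}q^{a}$, equivalently of $\zeta^{-w}q^{a}$, in $P(z,q)$ equals $w$ if $w\mid a$ and is $0$ otherwise, with the single proviso that when $a=0$ only the sign of the exponent matching $\operatorname{sign}(\operatorname{Im}z)$ occurs.

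\emph{Step 2 (expanding the integral).} Fix $\Gamma$ of genus $g$ with vertices $x_1,\dots,x_{2g-2}$ and edges $q_1,\dots,q_{3g-3}$, and fix an order $\Omega$. Along $\gamma_1\times\dots\times\gamma_{2g-2}$ the imaginary part $\operatorname{Im}(z_j)=y_j$ is constant, so each difference $z_{k_1}-z_{k_2}$ stays in a single half-plane throughout and Step 1 applies to the factor $P(z_{k_1}-z_{k_2},q_k)$ with the ambient sign $s_k=\operatorname{sign}(y_{k_1}-y_{k_2})$ determined by $\Omega$. Substituting and multiplying out, $I_{\Gamma,\Omega}$ becomes a sum over all ways of assigning to each edge $q_k$ a positive weight $w_k$, a sign $\epsilon_k\in\{\pm1\}$, and a nonnegative multiple $a_k$ of $w_k$ (with $\epsilon_k=s_k$ forced whenever $a_k=0$), the corresponding summand being
\[
\Big(\prod_{k=1}^{3g-3}w_k\Big)\,q_1^{a_1}\cdots q_{3g-3}^{a_{3g-3}}\int_{z_j\in\gamma_j}\ \prod_{j=1}^{2g-2}\zeta_j^{\,m_j}\,dz_1\cdots dz_{2g-2},\qquad m_j=\!\!\sum_{k:\,x_{k_1}=x_j}\!\!\epsilon_k w_k-\!\!\sum_{k:\,x_{k_2}=x_j}\!\!\epsilon_k w_k .
\]
Since $\int_{\gamma_j}\zeta_j^{m_j}\,dz_j=\int_0^1 e^{2\pi i m_j(iy_j+t)}\,dt$ equals $1$ for $m_j=0$ and $0$ otherwise, and the integrand is a product of one‑variable functions, the multiple integral is $\prod_j\delta_{m_j,0}$; only the \emph{balanced} assignments $m_1=\dots=m_{2g-2}=0$ survive. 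Hence the coefficient of $q_1^{a_1}\cdots q_{3g-3}^{a_{3g-3}}$ in $I_{\Gamma,\Omega}$ equals $\sum\prod_k w_k$, the sum running over all $(w_k,\epsilon_k)$ with $w_k\mid a_k$ for all $k$, with $m_j=0$ for all $j$, and with $\epsilon_k=s_k$ whenever $a_k=0$ (both signs being allowed once $a_k\geq1$).

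\emph{Step 3 (the bijection).} It remains to identify these admissible assignments with labeled tropical Hurwitz covers of $E$ of source $\Gamma$, order $\Omega$, multidegree $\underline a$, weight‑preserving with $\prod_k w_k$ matching the (labeled) multiplicity $\prod_k\omega(q_k)$. Fixing $\Omega$ is the same as fixing the bijection $\sigma$ sending $x_j$ to the vertex $p_{\Omega(x_j)}$ of $E$; by the analogue of Lemma~\ref{lem-tropdoubH} quoted in the text, the source of any such cover is a metrized $3$-valent (hence Feynman) graph with each vertex sitting over a distinct vertex of $E$, so the remaining source data is, per edge $q_k$, a weight $w_k$, a direction $\epsilon_k$ around the circle, and a number $r_k\geq0$ of extra full loops — after which the map, its slopes, and all edge lengths are determined. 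Balancing at $x_j$ is precisely $m_j=0$. The condition $a_k=\#(\pi^{-1}(p_0)\cap q_k)\cdot w_k$ reads $a_k=(c_k+r_k)w_k$ where $c_k\in\{0,1\}$ records whether the base arc from $\sigma(x_{k_1})$ to $\sigma(x_{k_2})$ in direction $\epsilon_k$ crosses $p_0$; since the two arcs joining these endpoints partition the circle minus two points, exactly one of them contains $p_0$. Thus $w_k\mid a_k$ is forced, $r_k=a_k/w_k-c_k$ is determined, it is automatically $\geq0$ when $a_k\geq1$, and when $a_k=0$ it is $\geq0$ exactly when $c_k=0$, i.e.\ exactly when $\epsilon_k$ is the direction not crossing $p_0$ — which, because the $\Omega$-order of the $x_j$ is the clockwise order of the $p_i$ starting at $p_0$, is precisely the forced sign $s_k$ of Step 2. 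So the constraints match on the nose, and conversely every admissible $(w_k,\epsilon_k)$ reconstructs a cover; one checks it is a \emph{Hurwitz} cover because at a $3$-valent genus‑$0$ vertex the local Riemann--Hurwitz quantity equals $1>0$ automatically, so each of the $2g-2$ vertices of $E$ does lie under a ramification point. This yields the asserted equality.

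\emph{Expected main obstacle.} The substantive analytic step is Step 1 — pinning down the precise $\zeta$-Laurent/$q$ expansion of $P$ in each half‑plane, including the exact effect of the $\tfrac1{12}E_2(q^2)$ normalization, and checking that the expansions are uniform along the straight contours so that term‑by‑term integration (and the interchange of the sum over assignments with the integral) is legitimate. On the combinatorial side the delicate point is the \emph{double} role of $\Omega$ in Step 3: it simultaneously selects the expansion region for each propagator factor and the placement $\sigma$ of the vertices on the circle, and one must verify that these two effects are compatible, i.e.\ that the exponent sign forced at $q_k^{0}$ is exactly the ``does not cross $p_0$'' direction. Finally, a little bookkeeping is needed to confirm that the automorphism factors (the powers of $\tfrac12$ attached to balanced and curled wieners) only enter later, when one passes from this labeled statement to the unlabeled Theorems~\ref{thm-tropmirror} and~\ref{thm-mirror}, and do not perturb the clean $\prod_k w_k$ matching obtained here.
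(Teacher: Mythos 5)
Your proposal is correct and follows essentially the same route as the paper: quote the propagator expansion (the paper's (\ref{PTaylor})--(\ref{Pexpansion}), from \cite{BBBM13}), reduce the refined integral to extracting the constant term in the vertex variables (you do this by integrating monomials directly along the straight contours, the paper by the coordinate change $x_j=e^{i\pi z_j}$ and the residue theorem -- the same computation), and then match monomial contributions weight-preservingly with labeled covers, with balancing $=$ constancy in each $x_j$, divisibility $w_k\mid a_k$ from the $q_k$-expansion, and the forced direction at $a_k=0$ being the arc not crossing $p_0$. Your Step 3 actually spells out in general the bijection that the paper only illustrates on an example, and your remark that the automorphism factors enter only when passing to the unlabeled Theorems \ref{thm-tropmirror} and \ref{thm-mirror} matches the paper's treatment.
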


To prove this theorem, we first change coordinates using  $x_j=e^{i\pi z_j}$. Using the series expansion of the Weierstra\ss-P-function, one can show (Theorem 2.22 in \cite{BBBM13}) that the propagator factors $P(z_i-z_j,q_k)$ then become
\begin{equation}\label{PTaylor}
 P\left(\frac{x_{k_1}}{x_{k_2}},q_k\right)= \frac{\left(\frac{x_{k_1}}{x_{k_2}}\right)^2}{\left(\left(\frac{x_{k_1}}{x_{k_2}}\right)^2-1\right)^2}+\sum_{a_k=1}^\infty \left(\sum_{w_k|a_k}w_k \left(\left(\frac{x_{k_1}}{x_{k_2}}\right)^{2w_k}+ \left(\frac{x_{k_2}}{x_{k_1}}\right)^{2w_k}\right)\right)q_{k}^{2a_k}.
\end{equation}
For the (in $q_k$) constant term, we can use a geometric series expansion which depends on the order $\Omega$: if $x_{k_1}<x_{k_2}$, we have
\begin{equation} \sum_{w_k=1}^\infty w_k \left( \frac{x_{k_1}}{x_{k_2}}\right)  ^{2w_k}.\label{eq-constantterm}\end{equation}

We can combine (\ref{PTaylor}) and (\ref{eq-constantterm}) to obtain the following expression for $P\big(\frac{x_{k_1}}{x_{k_2}},q_k\big)$ assuming that $x_{k_1}<x_{k_2}$ in the order $\Omega$:

\begin{align}\label{Pexpansion}
& P\left(\frac{x_{k_1}}{x_{k_2}},q_k\right)=\nonumber \\& \sum_{w_k=1}^\infty w_k \left( \frac{x_{k_1}}{x_{k_2}}\right)  ^{2w_k}+\sum_{a_k=1}^\infty \left(\sum_{w_k|a_k}w_k \left(\left(\frac{x_{k_1}}{x_{k_2}}\right)^{2w_k}+ \left(\frac{x_{k_2}}{x_{k_1}}\right)^{2w_k}\right)\right)q_{k}^{2a_k}.
\end{align}

Using the coordinate changes $x_j=e^{i\pi z_j}$ in the integral, we have to multiply with the derivative of the inverse functions which amounts to a factor of $\frac{1}{x_1\cdot \ldots \cdot x_{2g-2}}$. The integration paths are taken to circles around the origin, which carries the pole. The residue theorem thus tells us that the integral equals the residue, or, neglecting the factor $\frac{1}{x_1\cdot \ldots \cdot x_{2g-2}}$ from above, the in all $x_i$ constant term of the product 

\begin{equation}  \prod_{k=1}^{3g-3} P\left(\frac{x_{k_1}}{x_{k_2}},q_k\right).\label{eq-Feynman}\end{equation}

Thus the following statement holds:

\begin{proposition}[Expansion for refined Feynman integrals]\label{prop-Feynman}
Let $\Gamma$ be a Feynman graph and $\Omega$ an order. The (multivariate) Feynman integral $I_{\Gamma,\Omega}(q_1,\ldots,q_{3g-3})$ equals the in all $x_i$ constant term of the product (\ref{eq-Feynman}) using the expansion (\ref{Pexpansion}) for each factor.
\end{proposition}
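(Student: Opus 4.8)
The plan is to make precise the coordinate change $x_j=e^{i\pi z_j}$ announced just before the proposition and to evaluate the resulting contour integral by the residue theorem; given the closed-form expansion (\ref{PTaylor}) of the propagator from \cite{BBBM13}, everything then reduces to identifying which geometric series the order $\Omega$ forces one to use, plus a routine check that term-by-term integration is legitimate.

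\emph{Setting up the contour integral.} I would first observe that, since $\wp(z+1,q)=\wp(z,q)$ and $E_2(q^2)$ is independent of $z$, the integrand $\prod_{k}P(z_{k_1}-z_{k_2},q_k)$ of (\ref{eq-Igamma}) (now with distinct edge variables $q_1,\dots,q_{3g-3}$) is invariant under each translation $z_j\mapsto z_j+1$; by (\ref{PTaylor}) it depends on $z_j$ only through $e^{\pm 2\pi i z_j}$, hence is a single-valued Laurent function of the variables $u_j:=x_j^2=e^{2\pi i z_j}$. Under $x_j=e^{i\pi z_j}$ the path $\gamma_j$ is sent to a loop along which $u_j$ winds once around $0$ at radius $r_j^2=e^{-2\pi y_j}$, and $dz_j=\tfrac1{2\pi i}\tfrac{du_j}{u_j}$, so by the residue theorem the integral equals the coefficient of $u_1^{0}\cdots u_{2g-2}^{0}$ — equivalently, since $u_j=x_j^2$, of $x_1^{0}\cdots x_{2g-2}^{0}$ — in $\prod_k P\bigl(\tfrac{x_{k_1}}{x_{k_2}},q_k\bigr)$, with no extraneous constants (this is precisely what the normalization $\tfrac1{4\pi^2}$ in the definition of $P$ arranges, via (\ref{PTaylor})).

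\emph{Where $\Omega$ enters.} What remains is to pin down the expansion of each factor on the polycircle $\prod_j\{|x_j|=r_j\}$. The $q_k$-free part of $P\bigl(\tfrac{x_{k_1}}{x_{k_2}},q_k\bigr)$, namely $\tfrac{(x_{k_1}/x_{k_2})^2}{((x_{k_1}/x_{k_2})^2-1)^2}$, has poles exactly on $\{|x_{k_1}|=|x_{k_2}|\}$ and must be developed as a geometric series in whichever of the two ratios has modulus $<1$; the higher ($q_k^{2a_k}$, $a_k\ge1$) terms are symmetric Laurent polynomials in $x_{k_1},x_{k_2}$ and admit no choice. Now the radii $r_j=e^{-\pi y_j}$ are pairwise distinct, and by construction (Definition~\ref{def-int}) the order of the $y_j$ is exactly the order $\Omega$; hence for every one of the $3g-3$ edges the choice above is forced, and simultaneously, and is precisely the one recorded in (\ref{Pexpansion}) — name the endpoints so the numerator vertex precedes the denominator vertex in $\Omega$, and use (\ref{eq-constantterm}). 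This is why the constant term of (\ref{eq-Feynman}) to be taken is the one computed with (\ref{Pexpansion}): the proposition is just the previous paragraph's conclusion read through this expansion.

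\emph{The delicate point.} The only genuine subtlety is the interchange of the $q_k$- and $x_j$-summations with the $2g-2$ contour integrations. Once (\ref{PTaylor})–(\ref{Pexpansion}) is in hand this is routine: off the diagonals $\{|x_{k_1}|=|x_{k_2}|\}$ all the geometric series converge geometrically, so on a sufficiently small polycircle — which one may take, after shrinking, inside the region of convergence in the $q_k$ as well — the full double expansion converges absolutely and locally uniformly, justifying term-by-term integration and hence the stated identity.
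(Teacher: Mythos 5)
Your argument is correct and takes essentially the same route as the paper: substitute $x_j=e^{i\pi z_j}$, invoke the expansion (\ref{PTaylor}) quoted from \cite{BBBM13}, turn the integration paths into loops around the origin and apply the residue theorem to identify $I_{\Gamma,\Omega}$ with the constant term in all $x_i$ of the product (\ref{eq-Feynman}), with the pairwise distinct radii $e^{-\pi y_j}$ (ordered by $\Omega$) forcing the geometric-series expansion (\ref{eq-constantterm})--(\ref{Pexpansion}) of each $q_k$-constant term. Your explicit handling of the winding via $u_j=x_j^2$, the Jacobian, and the absolute convergence justifying term-by-term integration merely spells out details the paper treats briefly.
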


We now explain the rest of the ideas of the proof by following an example. Although in general an example is not a proof, this example should be sufficient to convey the main ideas.
We stick to the Feynman graph from Figure \ref{fig-raupe}, and to the order $\Omega$ with $x_1<x_3<x_4<x_2$.
We let $\underline{a} =(0,1,1,0,0,2)$. 
We claim that
\begin{equation} 2\cdot \left(\frac{x_1}{x_3}\right)^4 \cdot 1 \cdot   \left(\frac{x_2}{x_1}\right)^2 \cdot 1 \cdot\left(\frac{x_2}{x_1}\right)^2 \cdot 2 \cdot\left(\frac{x_4}{x_2}\right)^4 \cdot 3 \cdot\left(\frac{x_3}{x_4}\right)^6 \cdot 1 \cdot\left(\frac{x_4}{x_3}\right)^2 \label{eq-expr}\end{equation}
is a contribution to the $q_1^{a_1}\ldots q_{6}^{a_6}$-term of the in all $x_i$ constant term of the product (\ref{eq-Feynman}) after expanding, i.e.\ to the Feynman integral by Proposition \ref{prop-Feynman}.
Here, each factor of the form 
$$w_k\cdot \left(\frac{x_{k_1}}{x_{k_2}}\right)^{2w_k} $$
stands for the coefficient of $q_k^{a_k}$ of a summand taken from the sum $P\left(\frac{x_{k_1}}{x_{k_2}},q_k\right)$ in the product (\ref{eq-Feynman}) above, where each factor is as in (\ref{Pexpansion}).

It is easy to see that the $x_i$ all cancel and indeed we have a term which is constant in all $x_i$.
Furthermore, if $a_k=0$ (i.e.\ we are dealing with the constant term in $q_k$), we must because of (\ref{eq-constantterm}) ensure that the fraction respects the order $\Omega$ in the sense that if we have $\frac{x_i}{x_j}$, then $x_i<x_j$. We have $a_k=0$ for $k=1,4,5$, and for each of these fractions, the order is respected since $x_1<x_3$, $x_4<x_2$ and $x_3<x_4$. 

For $a_k\neq 0$, the half exponent $w_k$ must be a divisor of $a_k$ because of (\ref{Pexpansion}). For $k=2,3,6$ we must thus confirm that $1|1$, $1|1$ and $1|2$ which is the case.
Thus the claim holds.

Contributions to the Feynman integral like (\ref{eq-expr}) are in bijection with labeled tropical Hurwitz covers of $E$, and we now describe how to construct the corresponding cover:
\begin{itemize}
\item We draw the vertex $x_i$ above the points $p_j$ according to the order $\Omega$.
\item For each term $w_k\cdot \left(\frac{x_{k_1}}{x_{k_2}}\right)^{2w_k}$ for which $a_k=0$, we draw an edge $q_k$ of weight $w_k$ connecting $x_{k_1}$ to $x_{k_2}$ directly, without passing over the point $p_0$.
\item For each term $w_k\cdot \left(\frac{x_{k_1}}{x_{k_2}}\right)^{2w_k}$ for which $a_k\neq 0$, we draw an edge $q_k$ of weight $w_k$ connecting $x_{k_1}$ to $x_{k_2}$, but now we curl and pass over $p_0$, namely $\frac{a_k}{w_k}$ times.
\end{itemize}
We claim that what we obtain is a tropical Hurwitz cover of $E$ contributing to $N_{\underline{a},\Omega}^\Gamma$.
Let us first check that it has the right source: it must have the Feynman graph $\Gamma$ as source, because we have built it in such a way that each edge $q_k$ connects the vertices $x_{k_1}$ and $x_{k_2}$. 
Furthermore, the order $\Omega$ is satisfied because we built it that way: we required that the $x_i$ lie over the $p_j$ as given by $\Omega$. Also, it has multidegree $\underline{a}$, since we ensured that each edge with $a_k=0$ does not pass over $p_0$ at all, and that each edge with $a_k\neq0$ and with weight $w_k$ passes $\frac{a_k}{w_k}$ times over $p_0$, so its degree over $p_0$ is $a_k$.
Finally, why is it a tropical cover, i.e.\ why is it balanced at each $x_i$? The question whether $x_i$ appears in the denominator or numerator depends on the direction with which the corresponding edge enters $x_i$. The balancing condition is thus equivalent to the requirement that the product has to be constant in all $x_i$.

For our example (\ref{eq-expr}), the tropical labeled Hurwitz cover of $E$ we obtain is precisely the one from Figure \ref{fig-labelledCover}. At the vertex $x_1$ for example, the three edges $q_1$, $q_2$ and $q_3$ are adjacent. The edge $q_1$ is on the right of $x_1$, which corresponds to the fact that $x_1$ appears in the numerator of the fraction for $q_1$. The edges $q_2$ and $q_3$ are to the left, corresponding to the fact that $x_1$ appears in the denominator for both edges. The balancing condition stating that $2=1+1$, where $2$ is the weight of $q_1$ and $1$ are the weights of $q_2$ and $q_3$ is equivalent to the fact that $\frac{x_1^{2\cdot 2}}{x_1^{2\cdot 1}\cdot x_1^{2\cdot 1}}$ cancels in (\ref{eq-expr}).

Finally, the contribution to the Feynman integral given by our choice of monomials (\ref{eq-expr}) is $\prod w_k=2\cdot 1\cdot 1\cdot 2\cdot 3\cdot 1$, which equals the multiplicity with which the corresponding tropical cover is counted.

Thus we can see that the factors of the Feynman integral product (\ref{eq-Feynman}) represent the possibilities what an edge $q_k$ in a labeled tropical Hurwitz cover of $E$ can do: it can either not pass the point $p_0$ (which happens if and only if $a_k=0$) in which case it can have any possible weight $w_k$, but it must directly go from the smaller to the bigger vertex (see (\ref{eq-constantterm})). The sum of all these possibilities yields the in $q_k$ constant term
$\sum_{w_k=1}^\infty w_k \left( \frac{x_{k_1}}{x_{k_2}}\right)  ^{2w_k}$ (see (\ref{Pexpansion})).
Or it passes the point $p_0$ (which happens if and only if $a_k\neq 0$, in which case the order of the vertices plays no role, but the weight $w_k$ must be a divisor of $a_k$ (see (\ref{Pexpansion})), so we obtain 
$$\sum_{w_k|a_k}w_k \left(\left(\frac{x_{k_1}}{x_{k_2}}\right)^{2w_k}+ \left(\frac{x_{k_2}}{x_{k_1}}\right)^{2w_k}\right).$$

 So the magic explaining the right arrow of the mirror symmetry triangle is the fact that the propagator, build from the Weierstra\ss{}-P-function and the Eisenstein series, after the coordinate change, knows everything an edge in a labeled tropical cover can do.

\section{The cohomology of $\mathcal{M}_g$}\label{sec-Hom}
In a recent preprint by Chan, Galatius and Payne, the moduli space of tropical curves of genus $g$, $M^{\trop}_g$, is used to prove a surprising non-vanishing result about the cohomology of the moduli space of curves $\mathcal{M}_g$ for $g\geq 2$ \cite{CGP18}:
\begin{theorem}[Non-vanishing cohomology of $\mathcal{M}_g$ \cite{CGP18}] \label{thm-cohom}
The cohomology $H^{4g-6}(\mathcal{M}_g,\Q)$ is nonzero.
\end{theorem}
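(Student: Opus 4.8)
\emph{Reduction to a tropical statement.} The plan is to access $H^{4g-6}(\mathcal{M}_g;\mathbb{Q})$ --- a degree just below Harer's virtual cohomological dimension $4g-5$ of $\mathcal{M}_g$, and one where various conjectures (the excerpt mentions Kontsevich's) had predicted vanishing --- through the top graded piece of its weight filtration, which is computed by a tropical moduli space. Here $6g-6 = 2\dim_{\mathbb{C}}\mathcal{M}_g$ is the maximal weight that can appear on $H^{\ast}(\mathcal{M}_g;\mathbb{Q})$, and one has an isomorphism $\mathrm{Gr}^W_{6g-6}H^{4g-6}(\mathcal{M}_g;\mathbb{Q})\cong \widetilde H_{2g-1}(\Delta_g;\mathbb{Q})$, where $\Delta_g$ is the link of the cone point in $M_g^{\trop}$: the (symmetric $\Delta$-)complex of stable genus $g$ tropical curves of total edge length $1$, i.e.\ the analogue for $(g,0)$ of the complex drawn in red in Figure~\ref{fig-m12glued}. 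This isomorphism comes from Deligne's theory of mixed Hodge structures on the cohomology of $\mathcal{M}_g=\overline{\mathcal{M}}_g\setminus\partial\overline{\mathcal{M}}_g$, via the principle that the top weight-graded piece of $H^{\ast}$ of the complement of a normal crossings divisor is the reduced rational homology of the dual complex of that divisor --- and the dual complex of $\partial\overline{\mathcal{M}}_g$ is exactly $\Delta_g$. Two points require care: $\overline{\mathcal{M}}_g$ is a Deligne--Mumford \emph{stack} with only stacky normal crossings boundary, so one must use dual \emph{symmetric} $\Delta$-complexes and verify that the finite automorphism groups act trivially on rational homology (so that the quotient $\Delta_g$ computes the right thing); and one needs the standard identification of the top weight-graded piece coming from the weight spectral sequence of the pair $(\overline{\mathcal{M}}_g,\partial\overline{\mathcal{M}}_g)$. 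Granting this, it suffices to prove $\widetilde H_{2g-1}(\Delta_g;\mathbb{Q})\neq 0$.

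\emph{From $\Delta_g$ to a graph complex.} Inside $\Delta_g$ there are several subcomplexes carrying no reduced homology that can be discarded: the locus of tropical curves with a vertex of positive genus, the locus with a loop edge, the locus with a pair of parallel edges, and the locus with a bridge (separating edge). I would prove that each of these, and suitable unions of them, is contractible, via explicit acyclic matchings (discrete Morse theory) or deformation retractions on the posets of cells. Collapsing the resulting contractible subcomplex leaves $\widetilde H_{\ast}(\Delta_g;\mathbb{Q})$ unchanged, and the cellular chain complex of the quotient is, up to a degree shift, Kontsevich's \emph{graph complex} $\mathsf{G}^{(g)}$: generators are isomorphism classes of connected graphs of first Betti number $g$ with all vertices of valence $\geq 3$, no loops, no parallel edges and no bridges, each equipped with an orientation (an ordering of the edge set modulo even permutations, a graph with an orientation-reversing automorphism being zero), and the differential is the signed sum of edge contractions. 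Matching gradings, $\widetilde H_{2g-1}(\Delta_g;\mathbb{Q})$ is the homology of $\mathsf{G}^{(g)}$ in the part spanned by graphs with $2g$ edges (equivalently $g+1$ vertices). So the theorem reduces to nonvanishing of this graph-homology group.

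\emph{Producing a nonzero class.} Two routes are available. One is to exhibit an explicit graph cocycle in $\mathsf{G}^{(g)}$ in the correct bidegree and check directly that it is not a boundary. The other --- which I expect to scale better in $g$ --- is to compute the Euler characteristic of $\mathsf{G}^{(g)}$ (equivalently of $\Delta_g$): it can be evaluated combinatorially, it is expressible via Bernoulli numbers / special values of $\zeta$, and in particular it is nonzero with controlled sign; combining this with a statement bounding the range of degrees in which $H_{\ast}(\mathsf{G}^{(g)})$ can be supported then forces nonvanishing in precisely the degree corresponding to $4g-6$, and yields moreover that the dimension grows at least exponentially in $g$. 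This establishes the theorem for all sufficiently large $g$; the finitely many remaining small genera are handled by direct (partly computer-assisted) computation of the finite complex $\mathsf{G}^{(g)}$.

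\emph{Where the difficulty lies.} The first two steps are essentially structural --- the weight-filtration input is classical once the stack bookkeeping is arranged, the contractibility statements follow a familiar discrete Morse pattern, and the identification with $\mathsf{G}^{(g)}$ is then formal. The real obstacle is the last step: detecting non-vanishing of graph homology in exactly the bidegree matched to $4g-6$. Many of the most natural candidate classes vanish for orientation reasons (odd automorphisms), so one must either be ingenious about which graphs to combine or argue indirectly; and for the indirect route the delicate ingredient is the concentration statement --- that $H_{\ast}(\mathsf{G}^{(g)})$ cannot spread over too many degrees --- which is exactly what converts a nonzero Euler characteristic into the desired non-vanishing.
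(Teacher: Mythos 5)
Your first two steps are essentially the paper's own route (Theorem \ref{ThmCGP}): the surjection from $H^{4g-6}(\mathcal{M}_g,\Q)$ onto its top weight piece, identified via Deligne weight theory and the dual complex of the boundary of $\overline{\mathcal{M}}_g$ with $\tilde H_{2g-1}$ of the link of the cone point of $M_g^{\trop}$, and then the identification of that homology with the degree-zero graph homology $H_0(G^{(g)})$ after collapsing the contractible locus of graphs with loops or positive-genus vertices. (Two small inaccuracies there: cells with parallel edges are killed by the orientation/sign relations, as in Example \ref{ex-parallel}, rather than by a contractibility argument, and bridges need not be excluded from the graph complex; these are cosmetic.)

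The genuine gap is in your third step, which is exactly where the theorem's content lies, and you execute neither of your two proposed routes. For the ``explicit cocycle'' route, producing a cycle with $2g$ edges is easy (the wheel $W_g$ for odd $g$, as in the paper), but ``check directly that it is not a boundary'' is precisely the hard part; no elementary direct check is offered, and the paper resolves it only by invoking Willwacher's theorem (Theorem \ref{ThmH0Kont}), which identifies the degree-zero cohomology of the dual cochain complex $\prod_{g}\Hom(G^{(g)},\Q)$ with the Grothendieck--Teichm\"uller Lie algebra $\mathfrak{grt}_1$, where the wheel classes (and, in \cite{CGP18}, Brown's free Lie subalgebra on $\sigma_3,\sigma_5,\dots$, which also yields even genera and exponential growth) are visibly nonzero. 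For the Euler-characteristic route, the ``concentration statement'' you rely on --- that $H_*(G^{(g)})$, equivalently the reduced homology of the link, is supported only in (or near) the degree corresponding to graphs with $2g$ edges --- is not established anywhere in your sketch, is not a known theorem, and should not be expected: graph homology in a fixed loop order is not confined to a single degree, so a nonzero Euler characteristic only shows nonvanishing in \emph{some} degree, not in the one matching $H^{4g-6}(\mathcal{M}_g,\Q)$. You flag this ingredient yourself as ``delicate,'' but without it (or Willwacher's theorem, or some other device actually detecting classes in this specific degree, e.g.\ a pairing detecting the wheel classes) the argument does not close; note also that nonvanishing in exactly this degree fails for small even $g$ (where $\mathfrak{grt}_1$ has no elements of that weight), so no argument using only a nonzero Euler characteristic and generic bookkeeping could work uniformly in $g$.
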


Here, $\mathcal{M}_g$ denotes the moduli space parametrizing all smooth algebraic curves of genus $g$. Alternatively, we can think about it as the space parametrizing all Riemann surfaces of genus $g$. Studying this space amounts to studying all smooth curves of genus $g$ simultaneously and is therefore highly rewarding. The fact that this space is a complex manifold of dimension $3g-3$ essentially goes back to Riemann. The cohomology of $\mathcal{M}_g$ has been a subject of intense study for decades (see e.g.\ \cite{Kir02,FPZ00,Tom05}). Readers not familiar with cohomology of algebraic varieties may, for the purpose of this text, just take for granted that $H^{i}(\mathcal{M}_g,\Q)$, $i \in \NN$, is an interesting object to care about.

The following conjecture on the cohomology of $\mathcal{M}_g$ was made by Church, Farb and Putman:
\begin{conjecture}[\cite{CFP14}] \label{con-CFP}
For each fixed $k$,  $H^{4g-4-k}(\mathcal{M}_g,\Q)$ vanishes for all but finitely many $g$.
\end{conjecture}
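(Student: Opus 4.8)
The cases $k\le 0$ are immediate, since then $4g-4-k\ge 4g-4$ exceeds Harer's virtual cohomological dimension $4g-5$ of $\mathcal{M}_g$, so $H^{4g-4-k}(\mathcal{M}_g;\Q)=0$ for \emph{every} $g$. The content is therefore the range $k\ge 1$, and the plan is to convert the statement about high-degree cohomology into one about \emph{low-degree twisted homology}. The key input is that the mapping class group $\Gamma_g$ is a virtual duality group of dimension $4g-5$, with rational dualizing module the Steinberg module $\mathrm{St}_g\otimes\Q$, the top reduced homology of the curve complex. Virtual duality gives, for all $c\ge 0$,
\[ H^{4g-5-c}(\mathcal{M}_g;\Q)\;\cong\;H_c\bigl(\Gamma_g;\,\mathrm{St}_g\otimes\Q\bigr), \]
and writing $4g-4-k=4g-5-c$ identifies $c=k-1$. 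Thus the conjecture is equivalent to the assertion that, for each fixed codimension $c$, the twisted homology $H_c(\Gamma_g;\mathrm{St}_g\otimes\Q)$ vanishes for all but finitely many $g$.

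First I would set up a mechanism for letting $g$ vary with $c$ held fixed. I would seek a presentation of the Steinberg module that is uniform in $g$: concretely, a highly connected $\Gamma_g$-complex built from systems of disjoint simple closed curves (in the spirit of the arc and curve complexes) whose top homology realizes $\mathrm{St}_g$ and whose lower skeleta are induced from genus-$(g-1)$ stabilizers. The resulting equivariant spectral sequence would express $H_c(\Gamma_g;\mathrm{St}_g\otimes\Q)$ in terms of twisted homology of smaller mapping class groups, giving an induction on $g$ at fixed $c$, together with a genus-stabilization map $H_c(\Gamma_g;\mathrm{St}_g\otimes\Q)\to H_c(\Gamma_{g+1};\mathrm{St}_{g+1}\otimes\Q)$. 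The tropical material of this survey supplies the computational engine: by the identification of the boundary complex of $\overline{\mathcal{M}}_g$ with the tropical moduli space $\Delta_g$ (the link of the cone point of $M^{\trop}_g$ from Section~\ref{sec-curves}), the top-weight part of these groups is the reduced homology of $\Delta_g$, i.e.\ Kontsevich's graph complex, which one can analyze by explicit combinatorics and representation theory.

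With such a stabilization in place, the second step is to \emph{identify the stable value and prove it is zero}. I would show the stabilization map is an isomorphism once $g\gg c$ and then compute the stable group from the graph-complex model, producing enough relations---vanishing of graph cohomology in the relevant bidegree---to force it to vanish. The heuristic is that with $c=k-1$ fixed and $g\to\infty$ the spheres comprising $\mathrm{St}_g$ live in ever-higher dimension, so fixed-codimension homology should be ``too deep'' to carry classes; this is exactly the intuition behind the conjecture.

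The hard part will be this final vanishing of the stable value. The Steinberg module is a wedge of spheres indexed by the vast simplicial structure of the curve complex, and its twisted homology has no elementary description; the genuine input is the intricate combinatorics of $\Delta_g$ and of the associated graph complex. Controlling that combinatorics uniformly in $g$---ruling out \emph{persistent} nonzero classes in each fixed codimension---is the crux on which the whole argument turns, and is precisely the point at which any proof of the statement must stand or fall.
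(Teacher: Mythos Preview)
There is a fundamental problem: the statement you are trying to prove is a \emph{conjecture}, and the paper does not prove it---on the contrary, the paper explains that it is \emph{false}. Immediately after stating Conjecture~\ref{con-CFP}, the text records that while the case $k=1$ is known to hold, Theorem~\ref{thm-cohom} shows the conjecture fails for $k=2$: the group $H^{4g-6}(\mathcal{M}_g,\Q)$ is nonzero for all $g\ge 2$. So your argument must break somewhere, and it is worth locating the break precisely.

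The failure is exactly at the step you yourself flag as ``the hard part'': showing that the stable value vanishes. It does not. In fact the very machinery you propose to use---the tropical moduli space $M^{\trop}_g$ and Kontsevich's graph complex $G^{(g)}$---is what the paper (following Chan--Galatius--Payne) uses to prove \emph{non}-vanishing. Theorem~\ref{ThmCGP} gives a surjection $H^{4g-6}(\mathcal{M}_g,\Q)\twoheadrightarrow \tilde H_{2g-1}(M^{\trop}_g,\Q)\cong H_0(G^{(g)})$, and Theorem~\ref{ThmH0Kont} (via Willwacher) shows that the wheel class $[W_g]$ is a nonzero element of $H_0(G^{(g)})$. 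Thus the graph-complex model, far from ``producing enough relations to force vanishing'', produces explicit persistent nonzero classes in codimension $c=1$ (your $k=2$). Your heuristic that ``fixed-codimension homology should be too deep to carry classes'' is precisely what the wheel classes refute.

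Your reduction via virtual duality to $H_{k-1}(\Gamma_g;\mathrm{St}_g\otimes\Q)$ is correct and is indeed the framework in which Church--Farb--Putman formulated the conjecture; but from that point on you should be looking for obstructions rather than a proof.
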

 While this holds true for $k=1$ \cite{CFP12, MSS13}, Theorem \ref{thm-cohom} shows that it is wrong for $k=2$.  Morita, Sakasai and Suzuki \cite{MSS15} pointed out that Conjecture \ref{con-CFP} is implied by a more general conjecture made by Kontsevich around 25 years ago \cite{Kon94a}. Thus, Theorem \ref{thm-cohom} also disproves Kontsevich's conjecture.

As we outline in the following two subsections, in the proof of Theorem \ref{thm-cohom}, the moduli space of tropical curves $M^{\trop}_g$ acts as an intermediary between $\mathcal{M}_g$ on the one hand and Kontsevich's graph complex $G^{(g)}$ on the other hand. We define the latter in Subsection \ref{subsecKont}. 

Then, the non-vanishing of the homology groups $H_0(G^{(g)})$ can be used to deduce non-vanishing results for the cohomology of $\mathcal{M}_g$.

\subsection{The homology of $M^{\trop}_g$}\label{subsec-homology}

We denote by $\tilde{H}_{2g-1}(M^{\trop}_g,\Q)$ the reduced rational homology of the simplicial complex obtained from $M^{\trop}_g$ by restricting to tropical curves for which the sum of all lengths of edges equals $1$ (see e.g.\ the red simplicial complex in Figure \ref{fig-m12glued}).
Recall that a \emph{simplicial complex} is obtained by gluing simplices along their boundaries. The associated \emph{chain complex} has vector spaces generated by the simplices in each dimension, and as differentials linear maps recording how the $d$-dimensional simplices are glued onto the $d-1$-dimensional ones. We form homology groups by taking the kernels of the differential $\partial_i$ modulo the image of $\partial_{i+1}$. Elements in the kernel are called \emph{cycles} and elements in the image \emph{boundaries}. In \cite{CGP18}, the concept is generalized to allow self-gluing of simplices like we can have them for $M^{\trop}_g$ (see Example \ref{ex-mgtrop}). It can be shown that the homology obtained like this equals the singular homology of the underlying topological space.

In the following theorem, the moduli space of tropical curves $M^{\trop}_g$ acts as intermediary between $\mathcal{M}_g$ on the one hand and Kontsevich's graph complex $G^{(g)}$ (to be defined, with its homology, in Subsection \ref{subsecKont}) on the other:

\begin{theorem}[Cohomology of $\mathcal{M}_g$, homology of $M_g^{\trop}$ and Kontsevich's graph complex] \label{ThmCGP}
{\color{white} hm}

\vspace{-0.5cm}

\begin{enumerate}
\item \label{ThmCGP1} There is a surjection from the cohomology group $H^{4g-6}(\mathcal{M}_g,\Q)$ to the reduced rational homology $\tilde{H}_{2g-1}(M^{\trop}_g,\Q)$ (see Theorem 1.2 \cite{CGP18}).
\item \label{ThmCGP2} There is an isomorphism $\tilde{H}_{2g-1}(M^{\trop}_g,\Q)\sim H_0(G^{(g)})$ (see Theorem 1.3 \cite{CGP18}).
\end{enumerate}
\end{theorem}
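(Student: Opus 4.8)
\emph{Proof proposal.} The two parts are of quite different character: (\ref{ThmCGP1}) is a Hodge‑theoretic statement about the top weight part of the cohomology of the smooth Deligne--Mumford stack $\mathcal{M}_g$, while (\ref{ThmCGP2}) is a purely combinatorial‑topological statement about the symmetric $\Delta$‑complex $\Delta_g$ obtained by slicing $M^{\trop}_g$ along the locus where the edge lengths sum to $1$. Both go through the identification of $\Delta_g$ with the dual (boundary) complex of the simple normal crossings divisor $\partial\overline{\mathcal{M}}_g=\overline{\mathcal{M}}_g\setminus\mathcal{M}_g$, which is the content of Abramovich--Caporaso--Payne \cite{ACP12}; I would take that identification as the common starting point, together with the fact (already recalled in the text) that the homology of a symmetric $\Delta$‑complex agrees with the singular homology of its realization.

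For part (\ref{ThmCGP1}) I would argue via Deligne's weight filtration. Set $d=\dim\mathcal{M}_g=3g-3$. Because $\overline{\mathcal{M}}_g$ is a smooth proper DM stack and $\partial\overline{\mathcal{M}}_g$ a (stacky) simple normal crossings divisor, the rational weight spectral sequence of this compactification degenerates at $E_2$, and its ``top row'' computes the top weight graded piece of the cohomology of the open part: Deligne's theory, as worked out in terms of boundary complexes by Payne and others, gives a canonical isomorphism
$$ \mathrm{Gr}^W_{2d} H^{n}(\mathcal{M}_g;\Q)\;\cong\;\tilde H^{\,2d-n-1}\big(\Delta(\partial\overline{\mathcal{M}}_g);\Q\big), $$
the reduced cohomology of the dual complex. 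Two further inputs finish the job. First, the weight filtration on $H^n$ of a smooth variety never exceeds weight $2d$, so $W_{2d}H^n=H^n$ and the canonical projection $H^n(\mathcal{M}_g;\Q)\twoheadrightarrow\mathrm{Gr}^W_{2d}H^n(\mathcal{M}_g;\Q)$ is surjective. Second, by \cite{ACP12} the dual complex $\Delta(\partial\overline{\mathcal{M}}_g)$ \emph{is} $\Delta_g$, the link of the cone point of $M^{\trop}_g$. Taking $n=4g-6$ gives $2d-n-1=2g-1$, and composing yields $H^{4g-6}(\mathcal{M}_g;\Q)\twoheadrightarrow\tilde H^{2g-1}(\Delta_g;\Q)\cong\tilde H_{2g-1}(M^{\trop}_g;\Q)$, the last isomorphism being duality of finite‑dimensional $\Q$‑vector spaces. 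The care required is only that everything be carried out stack‑theoretically (Poincar\'e duality, the weight spectral sequence, the dual‑complex identification for a DM stack rather than a scheme), and that the dual complex be treated as a \emph{symmetric} $\Delta$‑complex recording the automorphisms of stable curves --- both supplied by \cite{ACP12} and \cite{CGP18}.

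For part (\ref{ThmCGP2}) I would work entirely on the combinatorial side. The reduced rational chain complex of the symmetric $\Delta$‑complex $\Delta_g$ has, in degree $p$, one generator $\Q$ for each isomorphism class of genus‑$g$ stable weighted graph $\mathbf G$ with exactly $p+1$ edges on which no automorphism induces an odd permutation of the edge set (generators admitting an orientation‑reversing automorphism being $0$), with boundary maps the alternating sum of edge contractions. The strategy is then to \emph{excise the degenerate part}: one shows that the subcomplex $\Delta_g^{\mathrm{deg}}\subseteq\Delta_g$ spanned by graphs containing a loop, a pair of parallel edges, a bridge, or a vertex of positive weight is \emph{contractible}, by producing explicit automorphism‑equivariant deformation retractions handling these four features in turn. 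Since $\Delta_g^{\mathrm{deg}}\hookrightarrow\Delta_g$ is a cofibration and $\Delta_g^{\mathrm{deg}}$ is contractible, $\tilde H_*(\Delta_g;\Q)\cong\tilde H_*(\Delta_g/\Delta_g^{\mathrm{deg}};\Q)$, and the quotient chain complex is spanned precisely by connected graphs of first Betti number $g$ with all vertices of valence $\geq 3$ and no loops, bridges, or multiple edges, oriented by an ordering of the edges up to even permutation, with the induced differential being edge contraction (a contraction producing a degenerate graph being zero in the quotient) --- that is, Kontsevich's graph complex $G^{(g)}$. A dimension count --- a $p$‑simplex of $\Delta_g$ is a graph with $p+1$ edges, hence with $p-g+2$ vertices --- matches, in the grading convention of \cite{CGP18}, the degree‑$(2g-1)$ part of this chain‑level identification with $H_0(G^{(g)})$, giving $\tilde H_{2g-1}(M^{\trop}_g;\Q)\cong H_0(G^{(g)})$.

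The main obstacle, and the genuinely new input, is the contractibility of $\Delta_g^{\mathrm{deg}}$ in part (\ref{ThmCGP2}): naive homotopies fail because of the self‑gluings (foldings) present in $\Delta_g$, so one must build retractions --- or discrete Morse matchings --- that respect all the automorphism data, handling positive‑weight vertices, loops, parallel edges, and bridges by separate and increasingly delicate arguments, and then check that they assemble on the nose. On the Hodge‑theoretic side of part (\ref{ThmCGP1}) the only real subtlety is organizing the weight spectral sequence and the $W_0$/dual‑complex identification in the stacky setting and matching them with the \cite{ACP12} description of the boundary complex; the underlying mixed Hodge theory itself is by now standard and can be cited.
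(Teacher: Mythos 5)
First, a point of comparison: the paper itself does not prove this theorem --- both parts are quoted directly from \cite{CGP18} (Theorems 1.2 and 1.3 there) --- so what you have written is a reconstruction of the argument of \cite{CGP18} rather than of anything in this survey. Your part (\ref{ThmCGP1}) is essentially that argument and is correct in outline: for a smooth DM stack of dimension $d=3g-3$ the weights on $H^n$ are at most $\min(2n,2d)$, so the projection onto $\mathrm{Gr}^W_{2d}H^{4g-6}$ is surjective; Deligne's weight spectral sequence for the (stacky) normal crossings compactification $\overline{\mathcal{M}}_g$ identifies this top graded piece with the reduced (co)homology of the boundary complex in degree $2d-n-1=2g-1$; and \cite{ACP12} identifies that boundary complex with the link $\Delta_g$ of the cone point of $M^{\trop}_g$. (You phrase the identification with reduced cohomology rather than homology of $\Delta_g$; over $\Q$ and for the purpose of producing a surjection this is harmless, as you note.)

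Part (\ref{ThmCGP2}) has the right architecture (excise a contractible subcomplex, identify the rational cellular chains of the quotient with a graph complex, match the degree shift $2g-1$), but your choice of excised locus creates a genuine gap. What \cite{CGP18} prove is contractibility of the subcomplex of graphs with a \emph{loop or a vertex of positive weight} only (plus repeated markings when $n>0$). Parallel edges are not removed topologically at all: graphs with parallel edges contribute zero to the rational chain complex because an automorphism induces an odd permutation of the edges --- exactly Example \ref{ex-parallel} above. Bridges are not excluded anywhere: the graph complex $G^{(g)}$, as defined in this paper and in \cite{CGP18}, has bridged graphs as honest generators (and parallel-edged graphs as zero generators). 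Your larger ``degenerate'' subcomplex, including the bridge and parallel-edge loci, is not known to be contractible by the cited results, and you give no argument for it; and even granting it, the quotient chain complex you would obtain is the complex of \emph{simple, bridgeless} graphs, not $G^{(g)}$, so you would still need the (known but nontrivial) quasi-isomorphism between the one-particle-irreducible/simple graph complex and the full graph complex to reach the stated $H_0(G^{(g)})$. The clean repair is to excise only the loop/positive-weight locus, as in \cite{CGP18}, and let the sign relations dispose of parallel edges at the chain level while keeping bridged graphs as generators.
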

(Note that for both results, \cite{CGP18} acutally proves more general versions than we state here.)

Obviously, we can combine the statements \ref{ThmCGP}(\ref{ThmCGP1}) and \ref{ThmCGP}(\ref{ThmCGP2}) to obtain a surjection 

$$H^{4g-6}(\mathcal{M}_g,\Q) \rightarrow H_0(G^{(g)}).$$

Since by Theorem \ref{ThmH0Kont} below, $H_0(G^{(g)})$ is nonzero, we finally deduce the surprising non-vanishing of $H^{4g-6}(\mathcal{M}_g,\Q)$ stated in Theorem \ref{thm-cohom}, thus disproving Conjecture \ref{con-CFP} and the more general conjecture by Kontsevich mentioned above.

\subsection{Kontsevich's graph complex} \label{subsecKont}
The graph complex is a chain complex whose homology is given as cycles modulo boundaries (see Subsection \ref{subsec-homology}).

\begin{definition}[Kontsevich's graph complex] Let $\Gamma$ be a graph without loops such that every vertex has valence at least $3$. Let $\Omega$ be a total order on its edges.

We let the tuples $[\Gamma,\Omega]$, where $\Gamma$ is a graph of genus $g$ with $n$ edges $q_1,\ldots,q_n$, generate the rational vector space $G^{(g)}_n$, where the generators satisfy the relations
$[\Gamma,\Omega]=\mbox{sgn}(\sigma)[\Gamma',\Omega']$, if there exists an isomorphism of graphs $\Gamma \cong \Gamma'$ under which the total orderings $\Omega$ and $\Omega'$ are related by the permutation $\sigma \in \mathbb{S}_n$. 

The differential maps $[\Gamma,\Omega]$ to
$$\partial [\Gamma,\Omega] = \sum_{i=1}^n [\Gamma/q_i,\Omega|_{\Gamma/q_i}],$$
where $\Gamma/q_i$ is the graph for which the edge $q_i$ is contracted and its adjacent vertices are identified, and $\Omega|_{\Gamma/q_i}$ is the induced ordering on the edges of $\Gamma/q_i$.

In this way, we obtain Kontsevich's graph complex $G^{(g)}$.
\end{definition}
Notice that sign and grading conventions vary in the literature.

\begin{example}\label{ex-parallel}
Let $\Gamma$ be the graph of genus $3$ depicted in Figure \ref{fig-raupe}, where the vertex labelings can now be ignored. Consider the orders $\Omega=(q_1<\ldots<q_6)$ and $\Omega'=(q_2<q_1<q_3<\ldots<q_6)$, then $[\Gamma,\Omega]=[\Gamma,\Omega']$ since the edges $q_1$ and $q_2$ are parallel and hence indistinguishable.
With the automorphism exchanging the parallel edges $q_1$ and $q_2$, we also obtain $[\Gamma,\Omega]=-[\Gamma,\Omega']$, since the permutation exchanging the first two entries of the order is odd.
Thus $[\Gamma,\Omega]=-[\Gamma,\Omega']=-[\Gamma,\Omega]$, so
$[\Gamma,\Omega]=0$ in the chain complex. The same argument holds for any graph with two parallel edges.
\end{example}

\begin{example}
Let $W_g$ be the “wheel graph” with $g$ $3$-valent vertices, one $g$-valent vertex,  and  $2g$ edges  arranged  in  a  wheel  shape  with $g$ spokes. The graph $W_5$ is depicted in Figure \ref{fig-W5}. Fix an ordering of its edges. By abuse of notation, we will not mention the order in the following, since none of the following depends on the exact choice.

\begin{figure}
\begin{center}
\input{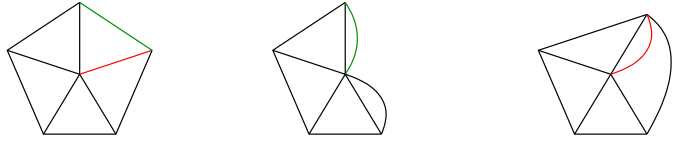}
\caption{The wheel graph $W_5$, the contraction of the red edge, and of the green edge.}\label{fig-W5}
\end{center}
\end{figure}
Any contraction of an edge $q$ in the wheel graph leads to a graph $W_g / q$ with two parallel edges and is thus zero when viewed as a chain in the graph complex. Hence $\partial [W_g]=0$. Thus, $[W_g]$ is a cycle in the homology of the graph complex.

The automorphism group of $W_g$ is the dihedral group containing rotations and reflections. 

If $g$ is even, a reflection can induce an odd permutation on the edges. E.g.\ if $g=4$, a diagonal reflection (i.e.\ a reflection fixing the inner and two nonadjacent outer vertices) induces a product of three transpositions of which two exchange two exterior edges and one two spokes. The remaining two spokes are fixed with the diagonal. 
Similar to Example \ref{ex-parallel} it then follows that $[W_g]=0$ as a chain.

If $g$ is odd, it can be shown that any rotation or reflection yields an even permutation of the edges. It follows that $[W_g]\neq 0 $ as a chain. This is only on the level of chains however, it is yet again a different story to prove that $[W_g]\neq 0$ when viewed as a cycle in the homology of the graph complex.
\end{example}

\begin{theorem}[The wheel graph is no boundary]\label{ThmH0Kont}
We have $[W_g]\neq 0$ in $H_0(G^{(g)})$, i.e.\ the cycle of the wheel graph is not a boundary. In particular, $H_0(G^{(g)})\neq 0$.
\end{theorem}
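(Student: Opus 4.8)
The plan is to show that $[W_g]$ is a nonzero class in $H_0(G^{(g)})$ by transporting the question to a context where a known nonvanishing result applies. Since $G^{(g)}$ in this grading convention has its homology in degree $0$ computed as cycles in $G^{(g)}_{2g}$ modulo boundaries from $G^{(g)}_{2g+1}$, and since by the preceding discussion $[W_g]$ is already a cycle (all contractions produce graphs with parallel edges, hence vanish), the entire content is to verify that $[W_g]$ is not a boundary. First I would reduce to a statement about a known computation: by Theorem \ref{ThmCGP}(\ref{ThmCGP2}) there is an isomorphism $\tilde H_{2g-1}(M^{\trop}_g,\Q)\cong H_0(G^{(g)})$, and by Theorem \ref{ThmCGP}(\ref{ThmCGP1}) the latter receives a surjection from $H^{4g-6}(\mathcal M_g,\Q)$; but this direction only shows nonvanishing of $H_0(G^{(g)})$ \emph{given} the theorem, so instead I would go the other way and invoke the classical identification of top-weight cohomology.

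The key input I would use is the relationship, established in \cite{CGP18} building on work on the tropical/Deligne weight filtration, between the graph complex $G^{(g)}$ and the top-weight cohomology $\mathrm{Gr}^W_{6g-6}H^{\bullet}_c(\mathcal M_g)$, together with the fact — going back to Kontsevich's original work on the graph complex and its relation to $\mathrm{Out}(F_g)$ and to symplectic Lie algebra cohomology — that the wheel class $[W_g]$ for odd $g$ represents a nonzero class. Concretely: the loop-order-$g$ part of the commutative graph complex is known (Willwacher–Zivkovic, and earlier partial results) to have $H_0\neq 0$ precisely because the wheel classes $w_3, w_5, w_7,\dots$ survive; these are detected by the map to the abelianization of the Grothendieck–Teichm\"uller Lie algebra, or more elementarily by a direct spectral-sequence / Euler characteristic argument showing that in the relevant bidegree the space of boundaries is strictly smaller than the space of cycles. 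So the steps would be: (1) record that $[W_g]$ is a cycle; (2) compute, or cite, the dimension of $G^{(g)}_{2g}$ restricted to the cyclic/one-vertex-of-high-valence stratum and the rank of $\partial\colon G^{(g)}_{2g+1}\to G^{(g)}_{2g}$; (3) produce an explicit linear functional on $G^{(g)}_{2g}$ that vanishes on all boundaries but not on $[W_g]$ — the natural candidate being the coefficient extracting the wheel summand, whose closedness under the transpose differential is the crux.

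The main obstacle is exactly step (3): showing that the "wheel coefficient" functional annihilates the image of $\partial$. Equivalently one must show no genus-$g$, $(2g+1)$-edge graph contracts (with signs) onto a nonzero multiple of $W_g$; this is where one needs the combinatorial observation that any graph contracting to $W_g$ must itself be a wheel with one edge subdivided, and for odd $g$ the automorphism-induced signs on such a graph force its class to be zero, so it contributes nothing. Carrying this out cleanly — enumerating the preimage graphs of $W_g$ under edge-contraction and checking the sign cancellation — is the real work; the rest is bookkeeping with orientations. I would therefore expect the heart of the proof to be a careful sign analysis on the "subdivided wheel" graphs, after which nonvanishing of $[W_g]$ in $H_0(G^{(g)})$, and hence of $H_0(G^{(g)})$ itself, follows immediately.
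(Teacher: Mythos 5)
Your reduction to the statement ``$[W_g]$ is not a boundary'' is right, and your passing remark that the wheel classes are detected via the Grothendieck--Teichm\"uller Lie algebra is exactly the route the paper takes (it simply cites Willwacher's identification of the degree-zero cohomology of the dual complex $\prod_g \Hom(G^{(g)},\Q)$ with $\mathfrak{grt}_1$, where the odd wheels map to the nonzero generators $\sigma_{2k+1}$). But the argument you actually commit to --- the ``wheel-coefficient'' functional together with the combinatorial claim in your step (3) --- has a genuine gap, and in fact the key claim is false. First, the graphs with $2g+1$ edges that contract onto $W_g$ are not ``wheels with one edge subdivided'': subdividing an edge creates a $2$-valent vertex, which is excluded from the complex. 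The actual preimages are obtained by splitting the $g$-valent hub of $W_g$ into two vertices of valence at least $3$ joined by a new edge $e$, with the $g$ spokes distributed between them (possible for all $g\geq 4$). Second, such hub-split graphs are in general \emph{nonzero} chains, and their boundary contains $W_g$ with coefficient $\pm 1$. For example, for $g=5$ split the hub into a vertex carrying the spokes to two adjacent rim vertices and a vertex carrying the other three: the only nontrivial automorphism is a reflection inducing an even permutation of the $11$ edges, so the chain is nonzero; contracting $e$ gives $W_5$, while every other contraction either produces parallel edges (hence zero) or a graph with two vertices of valence $4$, which is not a wheel. Hence the linear functional ``coefficient of $W_g$'' does not annihilate the image of $\partial$, and the sign-cancellation analysis you propose as the heart of the proof cannot succeed: $[W_g]$ is visibly homologous to a combination of non-wheel graphs, and the whole difficulty is to show that no chain can cancel \emph{all} of those other terms simultaneously.

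This is why the nontriviality of the wheel class is not an elementary bookkeeping statement (and an Euler-characteristic comparison of ranks would at best show $H_0(G^{(g)})\neq 0$ for some reason, not that the specific class $[W_g]$ survives). To repair the proof you would need a genuinely closed functional on $G^{(g)}$ pairing nontrivially with $[W_g]$, and producing one is exactly what the cited deep input provides: Willwacher's theorem identifies $H^0$ of the dual cochain complex with $\mathfrak{grt}_1$, and the images of the odd wheels are the Deligne--Drinfeld elements $\sigma_3,\sigma_5,\dots$, which are nonzero (indeed they generate a free Lie subalgebra by Brown's theorem). So your first instinct --- cite the $\mathfrak{grt}$ detection --- is the correct and intended proof; the ``more elementary'' alternative you develop does not work as stated.
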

This follows by work of Willwacher \cite{Wil15} identifying the degree $0$ cohomology of the dual cochain complex $\prod_{g=2}^\infty \Hom(G^{(g)},\Q)$ with the Grothendieck-Teichm\"uller Lie algebra (see e.g.\ \cite{Sch97, Dri90}).

\end {document}